\newcommand{\supp}{{\rm supp \;}}
\newcommand{\PP}{\,\Pi\hspace{-11pt}\Pi\,}
\newcommand{\R}{\mathbb{R}}
\newcommand{\N}{\mathbb{N}}
\newcommand{\dd}{\mathrm{d}}
\newcommand{\ddd}{\,\text{\rm{\mbox{\dj}}}}
\newcommand{\bmo}{\mathrm{bmo}}
\newcommand{\BMO}{\mathrm{BMO}}
\newcommand{\norm}[1]{\left\Vert#1\right\Vert}
\newcommand{\brkt}[1]{\left(#1\right)}
\newcommand{\abs}[1]{\left|#1\right|}
\newcommand{\set}[1]{\left\{#1\right\}}
\newcommand{\esc}[1]{\langle{#1}\rangle}
\newcommand{\phase}{\varphi}
\renewcommand{\d}{\partial}
    \newtheorem{thm}{Theorem}[section]
    \newtheorem{cor}[thm]{Corollary}
    \newtheorem{lem}[thm]{Lemma}
    \newtheorem{prop}[thm]{Proposition}
    \newtheorem{defn}[thm]{Definition}
    \newtheorem{rem}[thm]{Remark}
    \numberwithin{equation}{section}
\begin{document}

\title[Bilinear paraproducts]{Some endpoint estimates for bilinear paraproducts and applications}
\keywords{}
\subjclass[2000]{}
\author[S.~Rodr\'iguez-L\'opez]{Salvador Rodr\'iguez-L\'opez}
\address{Department of Mathematics, Uppsala University, Uppsala, SE 75106,  Sweden}
\email{salvador@math.uu.se}
\urladdr{http://www.math.uu.se/~salvador}
\author[W.~Staubach]{Wolfgang Staubach}
\email{wulf@math.uu.se}
\urladdr{http://www.math.uu.se/~wulf}

\thanks{The first author has been partially supported by the Grant MTM2010-14946.}

\subjclass[2010]{Primary 35S05, 35S30, 35S50, 42B35; Secondary 42B15, 42B20, 42B25}

\keywords{Bilinear paraproducts, BMO-type spaces, Bilinear Fourier integral operators, Bilinear Multipliers, Kato-Ponce estimates}

\maketitle

\begin{abstract}
In this paper we establish the boundedness of bilinear paraproducts on local BMO spaces. As applications, we also investigate the boundedness of bilinear Fourier integral operators and bilinear Coifman-Meyer multipliers on these spaces and also obtain a certain end-point result concerning Kato-Ponce type estimates.
\end{abstract}

\section{Introduction}
This paper is mainly concerned with end-point estimates for bilinear paraproducts of the form
\begin{equation}\label{intro:const coeff para}
 \Pi(f,g)(x):= \int_0^\infty Q_t f(x)\, P_tg(x) m(t)\frac{\dd t}{t}
\end{equation}  
or
\begin{equation}\label{intro:var coeff para}
\PP(f,g)(x):= \int_0^1 Q_t f(x)\, P_tg(x) m(t,x)\frac{\dd t}{t},
\end{equation}
where $P_t$ and $Q_t$ are standard frequency localisation operators. More specifically, we are interested in studying the behaviour of these paraproducts when the functions $f$ and $g$ belong to various local or global BMO classes. In this connection the basic results are due to L. Grafakos and R. Torres \cite{GT} which encompass the main end-point estimates regarding the boundedness of multilinear paraproducts. In particular Grafakos and Torres show the $L^\infty\times L^\infty \to \BMO$ boundedness of bilinear Calder\'on-Zygmund operators, but the case where the functions $f$ and $g$ belong to BMO-type spaces is not covered by \cite{GT}, or any other investigations that we are aware of.
As a matter of fact, this paper stems from our investigation of the problem of boundedness of bilinear Fourier integral operators \cite{RRS}, where it was shown that there exist amplitudes $\sigma(x,\xi,\eta)\in S^{0}_{1,0}(1,2)$ and non-degenerate phase functions $\phase_1$ and $\phase_2$ (see Definitions \ref{defn of hormander symbols} and \ref{defn:phase}) for which the associated one dimensional bilinear Fourier integral operator given by
\begin{equation*}
T^{\phase_1,\phase_2}_{\sigma}(f,g)(x) = \iint \sigma(x,\xi,\eta)\widehat{f}(\xi)\widehat{g}(\eta)e^{i\phase_1(x,\xi)+i \phase_2(x,\eta)} \ddd\xi \ddd\eta,
\end{equation*}
 fails to be bounded from $L^\infty\times L^\infty\to \BMO$. This rather surprising fact prompted us to search for alternative spaces for which a modification of the aforementioned negative result is valid. However, in doing so, we soon entered a rather unexplored territory which included at one end, the study of certain endpoint estimates for bilinear paraproducts about which little was known, and at the other end, the study of exotic function spaces which didn't exist in the literature.

Therefore, we had to deal with various issues which were not a-priori related to the study of multilinear operators. However as a bi-product, our investigation yields a new characterisation for the local BMO space (Theorem \ref{cor:bmo}). The definition of the new function spaces (see e.g. Definitions \ref {defn: BMO_w} and \ref{defn: X_w}) enable us to prove the boundedness of bilinear paraproducts of the form \eqref{intro:const coeff para} and \eqref{intro:var coeff para} which have been established in Theorem \ref{thm:main} and Theorem \ref{xdependent version} respectively. Since we needed the boundedness of linear Fourier integral operators on local BMO and we were not able to locate such a result anywhere in the literature, a proof was provided in Theorem \ref{eq:thm_bmo}. This theorem is also used in proving one of our main results concerning the boundedness of one dimensional bilinear Fourier integral operators (Theorem \ref{thm:1d_FIO}).\\

The paper also deals with two other issues, the first is the bilinear Coifman-Meyer multipliers, and the second is the problem of end-point Kato-Ponce estimates. From the point of view of this paper, these two problems are intimately connected. The problem of finding the least number of derivatives required for the validity of boundedness of the bilinear multipliers on one hand, and that of the Kato-Ponce estimate on the other, has been intensively investigated by many authors. Here we only mention those that have been of particular importance and interest to us, which are the papers by N. Tomita \cite{Tomita}, A. Miyachi and N. Tomita \cite{MT} and those of L. Grafakos, D. Maldonado and V. Naibo \cite{GMN}, and L. Grafakos and S. Oh \cite{GOh}. We also got the opportunity of applying our results to the $\bmo$ related end-point estimates for bilinear Coifman-Meyer multipliers (Theorem \ref{thm:H-M}) and Kato-Ponce estimates (Theorem \ref{thm:kato_ponce}).\\

The structure of the paper is as follows. Section \ref{prem} introduces some notations and recalls the definition of some standard function spaces. In Section \ref{functionspaces} we introduce our new function spaces, which are used in the formulation of our main results. Section \ref{local estimates} contains estimates for localization operators, where to our knowledge, the estimate for the operator $P_t$ in Proposition \ref{eq:pt_qt} is new. In Section \ref{paraproducts} we state and prove our main result concerning bilinear paraproducts and also obtain a variable coefficient version thereof in Subsection \ref{xdependent}. In Section \ref{FIOs} the boundedness of one dimensional bilinear Fourier integral operators on BMO-type spaces is proven. Section \ref{kato-ponce} is devoted to the study of bilinear Coifman-Meyer multipliers and our version of the end-point Kato-Ponce estimate.

\section{Preliminaries}\label{prem}

In what follows, we use the notation
\[
	a(tD)f(x):=\int_{\R^n} a(t\xi) \widehat{f}(\xi)e^{ix\cdot \xi} \, \ddd\xi,
\]
for $t>0$ and $a$ in a suitable symbol class, where $ \ddd\xi$ denotes the Lebesgue measure in $\mathbb{R}^n$ normalised by $(2\pi)^{-n}$ . Whenever $t=1$ we shall simply write $a(D)$.

Here and in the rest of the paper, the notation $A\lesssim B$ means that there exist a constant $C$ such that $A\leq C B .$ The notation $A\thickapprox B$ stands for $A\lesssim B$ and $B\lesssim A$.

Given a bump function $\widehat{\Theta},$ such that $\widehat{\Theta}=1$ in a neighbourhood of the origin (where $\widehat{\;}$ denotes the Fourier transform),  the Hardy space $H^1$ is the class of tempered distributions $f$ such that
\begin{equation}\label{eq:H1}
	\norm{f}_{H^1}\thickapprox \int \sup_{t>0}\sup_{\abs{x-y}<t} \abs{\widehat{\Theta}(tD) f(y)}\dd x,
\end{equation}
see e.g. \cites{FS}.
The local Hardy space $h^1$ can be defined in a similar way as $H^1$ (taking the supremum on $0<t<1$ instead), but we will use the following characterisation of $h^1$;\\
A function $f\in h^1(\R^n)$ if and only if $f\in L^1(\R^n)$ and $r_j(f)\in L^1(\R^n)$ for any $j=1,\ldots, n.$ Here $r_j$ denotes the local Riesz transform, which is defined by
\begin{equation}\label{eq:local_Riesz}
	r_j(f)(x)=-i\int \frac{\xi_j}{\abs{\xi}}(1-\widehat{\Theta}(\xi)) \widehat{f}(\xi) e^{i x\cdot\xi}\ddd \xi.
\end{equation}
Moreover
\begin{equation}\label{eq:norm_h1}
	\norm{f}_{h^1(\R^n)}\thickapprox \norm{f}_{L^1}+\sum_{j=1}^n \norm{r_j( f)}_{L^1}.
\end{equation}

We refer the reader to the work of D.~Goldberg \cite{Gol} and the paper of C.~Fefferman and E.M.~Stein \cite{FS} for further properties of $h^1$ and $H^1$ respectively.

The dual of $H^1$ is the space of functions in $\BMO$ (see e.g \cite{FS} or \cite{G}). The dual of $h^1$ is the space $\bmo$ which is the class of locally integrable functions for which
\begin{equation}\label{eq:bmo}
	\begin{split}
	\norm{f}_{\bmo} &:=\norm{f}_{\BMO}+\norm{\widehat{\Theta}(D) f}_{L^\infty}\thickapprox\norm{(1-\widehat{\Theta}(D)) f}_{\BMO}+\norm{\widehat{\Theta}(D) f}_{L^\infty}<+\infty
	\end{split}
\end{equation}
see e.g.~\cites{Gol}. It is worth mentioning that the definitions above do not depend on the choice of the function $\Theta$ (i.e. different choices yield equivalent norms).
\section{Admissible weights and related function spaces}\label{functionspaces}
In our investigation, we will introduce some classes of function spaces depending on certain weights which are required to satisfy certain properties. To this end we define the following class of weights.
\begin{defn}
A positive weight function $w$ defined on $(0,\infty)$ is {\em admissible}, if it has all of the following properties:
\begin{enumerate}[itemindent=0pt,label=\rm\bfseries (A\arabic*), fullwidth]
\item\label{A1} For every $s>0$, $0< \mathfrak{i}(s):=\inf_{t>0} \frac{w(st)}{w(t)}\leq \sup_{t>0} \frac{w(st)}{w(t)}=:\mathfrak{s}(s)<\infty.$
\item\label{A2} For every $t>0$, $w(t)\geq 1$
\item\label{A3} For some $N>0$, $\sup_{t>0} w(t) \brkt{1+ \frac{1}{t}}^{-N}<\infty$.
\item\label{A4} For any closed interval $I\subset (0,\infty)$, $0<\inf_{s\in I} \mathfrak{i}(s)\leq \sup_{s\in I}\mathfrak{s}(s)<\infty.$

\end{enumerate}
\end{defn}

\subsection*{Examples of admissible weights}

\begin{enumerate}[fullwidth]
	\item It is trivial to see that $w(t)=1$ is admissible.
	\item The function $w(t)=1+\log_+ 1/t$ is admissible. Property \ref{A2} is clear from the definition. Moreover, since for any $\epsilon>0$, $\sup _{0<t<1} t^\epsilon w(t)$ is finite and $w$ is bounded for $t>1$, it follows that it satisfies \ref{A3}. Furthermore, it is easy to see that, for any $s\geq 1$ and $t>0$ one has  $(1+\log s)^{-1}\leq \frac{w(st)}{w(t)}\leq 1$.
	Then, if $s\leq 1$, since this particular weight is decreasing, $w(st)\geq w(t)=w((st)/s)\geq (1-\log s)^{-1} w(st)$,
	which yields $\mathfrak{i}(s)\geq  \chi_{(0,1)}(s)+(1+\log s)^{-1}\chi_{[1,\infty)}(s)$ and $\mathfrak{s}(s)\leq 1-\log s \chi_{(0,1)}(s)$. Hence \ref{A1} and \ref{A4} are also satisfied.
\item Given any $\alpha>0$, $w_\alpha(t)= \brkt{1+\log_+ 1/t}^\alpha$ is also admissible. 
\end{enumerate}

\begin{cor}\label{cor:equivalence} For any admissible weight $w$, one has that if $t\approx s$ then
$w(t)\approx w(s)$.
\end{cor}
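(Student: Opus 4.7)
The plan is to directly combine axioms \ref{A1} and \ref{A4}, since between them they are essentially designed to give exactly this doubling-type conclusion. The only modest point is to be clear about how the hypothesis $t\approx s$ is quantified.

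First, I would unpack the hypothesis: $t\approx s$ means there exist positive constants $c_1\leq c_2$ (not depending on $t,s$) such that $c_1 s\leq t\leq c_2 s$, or equivalently $\lambda:=t/s\in I$ where $I:=[c_1,c_2]$ is a closed subinterval of $(0,\infty)$. The goal is then to bound $w(t)/w(s)=w(\lambda s)/w(s)$ independently of $s$ and of the particular value $\lambda\in I$.

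Next, I would apply \ref{A1} pointwise in $\lambda$: for every $s>0$,
\[
\mathfrak{i}(\lambda)\;\leq\;\frac{w(\lambda s)}{w(s)}\;\leq\;\mathfrak{s}(\lambda).
\]
Here the point is that the bounds depend on $\lambda$ but not on $s$, which is exactly what \ref{A1} delivers.

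Finally, I would invoke \ref{A4} with the closed interval $I\subset(0,\infty)$: this gives
\[
0\;<\;\inf_{\lambda\in I}\mathfrak{i}(\lambda)\;\leq\;\sup_{\lambda\in I}\mathfrak{s}(\lambda)\;<\;\infty,
\]
and therefore
\[
\Bigl(\inf_{\lambda\in I}\mathfrak{i}(\lambda)\Bigr) w(s)\;\leq\; w(t)\;\leq\;\Bigl(\sup_{\lambda\in I}\mathfrak{s}(\lambda)\Bigr) w(s),
\]
which is precisely $w(t)\approx w(s)$, with implicit constants depending only on $c_1,c_2$. There is no real obstacle here; the nontrivial content is entirely encoded in \ref{A4}, which is exactly the uniform-in-$\lambda$ upgrade of \ref{A1} needed to conclude.
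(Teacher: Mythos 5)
Your proof is correct and follows exactly the same approach as the paper, which simply states that the result follows by combining properties \ref{A1} and \ref{A4}; you have merely spelled out the details of that combination.
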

\begin{proof}   The result  follows by combining properties \ref{A1} and \ref{A4}.
 \end{proof}

Let $\psi$ be a function such that $\widehat{\psi}\in \mathcal{C}^\infty_c(\R^n)$ such that $0\not\in \supp \widehat {\psi}$ and such that
\begin{equation}\label{eq:normalisation}
	\int_0^\infty \abs{\widehat{\psi}(t\xi)}^2 \frac{\dd t}{t}\thickapprox 1.
\end{equation}
Given an admissible weight $w$ and $\psi$ as above, let us define
\begin{equation}\label{eq:sigma}
	\sigma_w(\xi)=\brkt{\int_0^\infty \abs{\widehat{\psi}(t\xi)}^2 w^2(t)\frac{\dd t}{t}}^{\frac{1}{2}}.
\end{equation}
For simplicity of notation, we omit the explicit dependency of $\sigma_w$ on $\psi$.

\begin{lem}\label{lem:regularity_sigma} For any $\sigma_w$ defined as in \eqref{eq:sigma} one has that $\sigma_w,\sigma_w^{-1}\in \mathcal{C}^{\infty}(\R^n\setminus\{0\})$ and $\sigma_w^{-1}$ is bounded. Moreover, for any multiindex $\alpha$
\begin{enumerate}[itemindent=0pt,label=\rm\bfseries (\arabic*), fullwidth]
	\item\label{eq:one} $\abs{\partial^\alpha_\xi \brkt{\sigma_w(\xi)}}\lesssim w\brkt{1/\abs{\xi}} \abs{\xi}^{-\abs{\alpha}}$;
	\item\label{eq:two}\label{eq:sima_inv} $\abs{\partial^\alpha_\xi \brkt{1/\sigma_w(\xi)}}\lesssim w\brkt{1/\abs{\xi}}^{-1} \abs{\xi}^{-\abs{\alpha}}$.
\end{enumerate}
In particular, $1/\sigma_w$ is a H\"ormander-Mikhlin multiplier.
\end{lem}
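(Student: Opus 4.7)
The plan is to exploit the compact support of $\widehat{\psi}$ away from the origin, which localises the defining integral \eqref{eq:sigma} to the window $t \approx 1/|\xi|$, and then combine this localisation with the admissibility axioms for $w$. Write $\supp\widehat{\psi}\subset \{a\leq |\xi|\leq b\}$ for some $0<a<b$; then for each fixed $\xi\neq 0$ the integrand in \eqref{eq:sigma} is supported in $t\in [a/|\xi|,b/|\xi|]$, a $\dd t/t$-interval of fixed length $\log(b/a)$. Property \ref{A4} together with Corollary \ref{cor:equivalence} then gives $w(t)\approx w(1/|\xi|)$ uniformly in $\xi$, and pairing this with the normalisation \eqref{eq:normalisation} (say after the change of variables $s=t|\xi|$) yields the pointwise comparability
\[
\sigma_w(\xi)^2 \approx w(1/|\xi|)^2.
\]
Since $w\geq 1$ by \ref{A2}, this already establishes that $\sigma_w^{-1}$ is bounded.

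For smoothness on $\R^n\setminus\{0\}$, the hypothesis $\widehat{\psi}\in \mathcal{C}^\infty_c$ with $0\notin \supp\widehat{\psi}$ makes the $t$-support of the integrand compact and locally uniform in $\xi\neq 0$, so differentiation under the integral sign gives $\sigma_w^2\in \mathcal{C}^\infty(\R^n\setminus\{0\})$; positivity of $\sigma_w^2$ then transfers this regularity to $\sigma_w=(\sigma_w^2)^{1/2}$ and to $1/\sigma_w$.

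For the quantitative bounds, I would set $F:=|\widehat{\psi}|^2$ and $h(\xi):=\sigma_w^2(\xi)=\int_0^\infty F(t\xi)\,w^2(t)\,\dd t/t$. Differentiating gives
\[
\partial_\xi^\alpha h(\xi) = \int_0^\infty t^{|\alpha|}(\partial^\alpha F)(t\xi)\,w^2(t)\,\frac{\dd t}{t},
\]
and since $\partial^\alpha F$ inherits the annular support of $F$, the localisation $t\approx 1/|\xi|$ is preserved. The same argument as above then yields
\[
|\partial_\xi^\alpha h(\xi)| \lesssim |\xi|^{-|\alpha|}\,w(1/|\xi|)^2.
\]

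To pass from $h$ to $h^{\pm 1/2}$, I would apply the Fa\`a di Bruno formula: every term in $\partial^\alpha(h^{\pm 1/2})$ has the shape $c\cdot h^{\pm 1/2-k}\prod_{i=1}^k \partial^{\alpha_i}h$ with $\alpha_1+\cdots+\alpha_k=\alpha$. Combining $h\approx w(1/|\xi|)^2$ with the bound just derived, each such term is controlled by
\[
w(1/|\xi|)^{\pm 1-2k}\cdot\prod_{i=1}^k |\xi|^{-|\alpha_i|}\,w(1/|\xi|)^2 = w(1/|\xi|)^{\pm 1}\,|\xi|^{-|\alpha|},
\]
so the powers of $w$ telescope exactly, giving \ref{eq:one} and \ref{eq:two}. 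The Hörmander--Mikhlin conclusion for $1/\sigma_w$ is then immediate: by \ref{A2} one has $w(1/|\xi|)^{-1}\leq 1$, and \ref{eq:two} collapses to $|\partial^\alpha(1/\sigma_w)(\xi)|\lesssim |\xi|^{-|\alpha|}$. The only delicate step is the Fa\`a di Bruno bookkeeping, but once the estimate for $h$ is in hand the cancellation of $w$-powers is algebraically forced and everything else reduces to the annular support of $\widehat{\psi}$ together with the admissibility of $w$.
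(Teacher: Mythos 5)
Your proof is correct. The first half (localising the $t$-integral to $t\approx 1/|\xi|$ via the annular support of $\widehat{\psi}$, invoking \ref{A4} and Corollary \ref{cor:equivalence} to freeze $w(t)\approx w(1/|\xi|)$, and deducing both $\sigma_w^2(\xi)\approx w(1/|\xi|)^2$ and the bound $|\partial^\alpha\sigma_w^2(\xi)|\lesssim |\xi|^{-|\alpha|}w(1/|\xi|)^2$) coincides exactly with the paper's argument. Where you diverge is in passing from $\sigma_w^2$ to $\sigma_w^{\pm 1}$: the paper handles \ref{eq:one} via a recursive Leibniz identity for $\partial^\alpha\sigma_w$ in terms of $\partial^\alpha\sigma_w^2$ and lower-order derivatives of $\sigma_w$, proved by induction on $|\alpha|$, and then deduces \ref{eq:two} by yet another inductive Leibniz argument applied to the identity $\sigma_w\cdot(1/\sigma_w)=1$. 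You replace both of these with a single application of Fa\`a di Bruno to $u\mapsto u^{\pm 1/2}$ composed with $h=\sigma_w^2$, which handles the two estimates symmetrically in one shot: each term is $h^{\pm 1/2-k}\prod_{i}\partial^{\beta_i}h$ with $\sum\beta_i=\alpha$, $|\beta_i|\geq 1$, and the powers of $w(1/|\xi|)$ telescope. Your route is more systematic and avoids the double induction, at the mild cost of invoking the multivariate Fa\`a di Bruno formula; the conclusions and the H\"ormander--Mikhlin deduction (using \ref{A2} to drop $w(1/|\xi|)^{-1}\leq 1$) are identical.
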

\begin{proof}
Let $\Psi(\xi):=\abs{\widehat{\psi}(\xi)}^2\in \mathcal{C}^\infty_c(\R^n)$, which is supported in an annulus. Substituting this in \eqref{eq:sigma} and changing variables yield
\[
	\begin{split}
	\d^\alpha_\xi \sigma_w^2(\xi) %
=\abs{\xi}^{-\abs{\alpha}}\int_0^\infty \d^\alpha_\xi \Psi(t{\rm sgn}(\xi)) t^{\abs{\alpha}} w^2(t/\abs{\xi})\frac{\dd t}{t},
	\end{split}
\]
where ${\rm sgn}(\xi):=\xi/\abs{\xi}$. Since $\Psi$ is supported in an annulus, one has that
$t\thickapprox 1$ in the integral above and therefore Corollary \ref{cor:equivalence} and the boundedness of $\d^\alpha \Psi$ yield
\begin{equation}\label{eq:square}
	\abs{\d^\alpha_\xi \sigma_w^2(\xi) }\lesssim \abs{\xi}^{-\abs{\alpha}} w^2(1/\abs{\xi}).
\end{equation}
Observe that a change of variables, {Corollary \ref{cor:equivalence}} and \eqref{eq:normalisation} imply
\begin{equation}\label{eq:sigma equiv weight}
	\sigma_w(\xi)=\brkt{\int_0^\infty \Psi(t{\rm sgn}(\xi)) w^2(t/\abs{\xi})\frac{\dd t}{t}}^{1/2}\thickapprox w(1/\abs{\xi}).
\end{equation}
By Leibniz rule we have
\[
	\d^{\alpha} \sigma_w(\xi)=\frac{1}{2\sigma_w(\xi)}\brkt{\d^\alpha(\sigma_w^2(\xi))-\sum_{\overset{\alpha=\beta+\gamma}{ \beta\neq 0\neq \gamma}} c_{\beta,\gamma}\,\d^\beta \sigma_w(\xi) \d^{\gamma}\sigma_w(\xi)}.
\]
Therefore, \ref{eq:one} follows by using this and \eqref{eq:square}-\eqref{eq:sigma equiv weight}, followed by an inductive argument.

 To prove \ref{eq:two} we use \ref{eq:one}, \eqref{eq:sigma equiv weight} and apply the Leibniz rule to the function $\sigma_w(\xi)/\sigma_w(\xi)$. Observe also that \ref{A2} and \eqref{eq:sigma equiv weight} imply that $\sigma_w^{-1}$ is bounded, which yields that $\sigma_w^{-1}$ is a H\"ormander-Mikhlin multiplier. The verification of the details are left to the interested reader.
\end{proof}

\begin{prop}\label{prop:independency} Let $\psi_1$ and $\psi_2$ be as above, and let $\sigma_w^1$ and $\sigma_w^2$ be the corresponding symbols defined as in \eqref{eq:sigma}. Then
\[
	\norm{\brkt{\frac{\sigma_w^1}{\sigma_w^2}}(D) f}_{H^1}\thickapprox \norm{f}_{H^1},
 \]
 for any $f\in H^1(\R^n)$.
\end{prop}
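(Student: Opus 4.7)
The plan is to show that the symbol $\sigma_w^1/\sigma_w^2$ satisfies the classical H\"ormander--Mikhlin conditions on $\R^n\setminus\{0\}$, and then invoke the $H^1$-boundedness of H\"ormander--Mikhlin multipliers. The beauty is that the weight factor $w(1/|\xi|)$ produced by $\sigma_w^1$ is cancelled exactly by the factor $w(1/|\xi|)^{-1}$ produced by $1/\sigma_w^2$ in Lemma \ref{lem:regularity_sigma}, so the ratio behaves like a dimensionless (degree $0$) multiplier.

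More concretely, I would write, for $\xi\neq 0$,
\[
\partial^\alpha_\xi\!\left(\frac{\sigma_w^1(\xi)}{\sigma_w^2(\xi)}\right)=\sum_{\beta+\gamma=\alpha}c_{\beta,\gamma}\,\partial^\beta_\xi \sigma_w^1(\xi)\,\partial^\gamma_\xi\!\left(\frac{1}{\sigma_w^2(\xi)}\right),
\]
and then apply part \ref{eq:one} of Lemma \ref{lem:regularity_sigma} to $\sigma_w^1$ and part \ref{eq:two} of the same lemma to $1/\sigma_w^2$. Each term in the sum is then bounded by
\[
w\!\brkt{1/\abs{\xi}}\,\abs{\xi}^{-\abs{\beta}}\cdot w\!\brkt{1/\abs{\xi}}^{-1}\,\abs{\xi}^{-\abs{\gamma}}=\abs{\xi}^{-\abs{\alpha}},
\]
which is precisely the H\"ormander--Mikhlin estimate for $\sigma_w^1/\sigma_w^2$. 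By the standard theory (see e.g.\ \cite{FS}), such a multiplier is bounded on $H^1(\R^n)$, giving the inequality $\|(\sigma_w^1/\sigma_w^2)(D)f\|_{H^1}\lesssim\|f\|_{H^1}$.

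The reverse inequality is obtained symmetrically: swapping the roles of $\psi_1$ and $\psi_2$ yields that $\sigma_w^2/\sigma_w^1$ is also a H\"ormander--Mikhlin multiplier, and hence bounded on $H^1$. Applying this to $g=(\sigma_w^1/\sigma_w^2)(D)f$ recovers $\|f\|_{H^1}\lesssim \|(\sigma_w^1/\sigma_w^2)(D)f\|_{H^1}$.

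There is no substantial obstacle here, as the hard work is already contained in Lemma \ref{lem:regularity_sigma}; the only point requiring a little care is to verify that the cancellation of the weight factors is uniform in $\alpha$ (i.e.\ that the implicit constants in Lemma \ref{lem:regularity_sigma} do not depend on $w$ in a way that could blow up when combined), which follows by inspection of the inductive proof of that lemma.
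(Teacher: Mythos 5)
Your proof is correct and follows essentially the same route as the paper: combine parts \ref{eq:one} and \ref{eq:two} of Lemma~\ref{lem:regularity_sigma} via the Leibniz rule to see that the weight factors cancel and $\sigma_w^1/\sigma_w^2$ is a H\"ormander--Mikhlin multiplier, invoke $H^1$-boundedness of such multipliers, and obtain the reverse inequality by symmetry and composition with $\sigma_w^2/\sigma_w^1$. The paper cites Garc\'ia-Cuerva and Rubio de Francia rather than Fefferman--Stein for the $H^1$-boundedness, but this is inconsequential.
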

\begin{proof}  Let us first observe that from Lemma \ref{lem:regularity_sigma} and the Leibniz formula, it follows that
 for any multiindex $\alpha$
 $$\abs{\partial^\alpha_\xi \brkt{\frac{\sigma_w^1(\xi)}{\sigma_w^2(\xi)}}}\lesssim  \abs{\xi}^{-\abs{\alpha}}.$$
In particular, this implies that ${\sigma_w^1}/{\sigma_w^2}$ is a H\"ormander-Mikhlin multiplier. Therefore, \cite{MR807149}*{Thm. III.7.30} yields that
\[
	\norm{\brkt{\frac{\sigma_w^1}{\sigma_w^2}}(D) f}_{H^1}\lesssim\norm{f}_{H^1},
\]
for any $f\in H^1(\R^n)$. A similar estimate also holds for the operator with symbol  ${\sigma_w^2}/{\sigma_w^1}$. Hence the result follows since
\[
	\norm{f}_{H^1}=\norm{\brkt{\frac{\sigma_w^2}{\sigma_w^1}}(D)\brkt{\brkt{\frac{\sigma_w^1}{\sigma_w^2}}(D) f}}_{H^1}\lesssim \norm{\brkt{\frac{\sigma_w^1}{\sigma_w^2}}(D) f}_{H^1}.
\]
\end{proof}

Let $Z(\R^n)$ be the the complete locally convex space defined by
\[
	Z(\R^n):=\set{f\in \mathcal{S}(\R^n):\, \partial^\alpha \widehat{f}(0)=0 \quad \mbox{for every multi-index $\alpha$}},
\]
as a topological subspace of $\mathcal{S}(\R^n)$. Let $Z'(\R^n)$ be its topological dual, which can be identified as the factor space $\mathcal{S}'(\R^n)/\mathcal{P}(\R^n)$, where $\mathcal{P}(\R^n)$ is the space of polynomials in $\R^n$. That is, $Z'(\R^n)$ consists of the class of distributions of the form $f+P$ where $f\in \mathcal{S}'(\R^n)$ and $P\in \mathcal{P}(\R^n)$.

\begin{prop} Let $m(\xi)$ be either $\sigma_w$ or $1/\sigma_w$. Then the functional given by
\begin{equation*}\label{eq:multiplier}
	I_{m}f(x)=m(D)f
\end{equation*}
defines a continuous linear operator on $Z(\R^n)$. Moreover
\[
	I_ {\sigma_w}I_ {1/\sigma_w}f=I_ {1/\sigma_w}I_ {\sigma_w}f=f,\quad \mbox{for any $f\in Z(\R^n)$}.
\]
\end{prop}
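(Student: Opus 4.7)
The plan is to pass to the Fourier side and, for $m\in\{\sigma_w,1/\sigma_w\}$ and $f\in Z(\R^n)$, to set $g(\xi):=m(\xi)\widehat{f}(\xi)$ on $\R^n\setminus\{0\}$ extended by $g(0):=0$, then show that $g\in\mathcal{S}(\R^n)$ with $\partial^\alpha g(0)=0$ for every $\alpha$. Granting this, $I_m f$ is the unique element of $Z(\R^n)$ whose Fourier transform is $g$, continuity of $I_m$ on $Z(\R^n)$ will follow from a quantitative version of the same estimates, and the inverse identity reduces, after Fourier transform, to the pointwise relation $\sigma_w(\xi)(1/\sigma_w(\xi))=1$ on $\R^n\setminus\{0\}$.

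From Lemma \ref{lem:regularity_sigma} together with property \ref{A3} I extract the pointwise bounds $\abs{\partial^\alpha \sigma_w(\xi)}\lesssim w(1/\abs{\xi})\abs{\xi}^{-\abs{\alpha}}\lesssim (1+\abs{\xi})^N \abs{\xi}^{-\abs{\alpha}}$, with $N$ the exponent from \ref{A3}, and $\abs{\partial^\alpha(1/\sigma_w)(\xi)}\lesssim \abs{\xi}^{-\abs{\alpha}}$ since $w\geq 1$. In either case $m\in \mathcal{C}^\infty(\R^n\setminus\{0\})$ has polynomial growth at infinity and at worst a polynomial singularity of order $\leq N$ at the origin, so by Leibniz, the rapid decay of $g$ and of all its derivatives for $\abs{\xi}\geq 1$ is immediate from the Schwartz decay of $\widehat{f}$.

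The main obstacle is smoothness of $g$ at the origin, where $m$ may genuinely fail to be smooth. Here the infinite-order vanishing of $\widehat{f}$ at $0$ is what makes the argument work: for any $M>0$ and any multi-index $\beta$, $\abs{\partial^\beta \widehat{f}(\xi)}\lesssim_M \abs{\xi}^M$ in a neighborhood of the origin. A Leibniz expansion of $\partial^\alpha g$ combined with the bounds on $\partial^\gamma m$ yields $\abs{\partial^\alpha g(\xi)}\lesssim_M \abs{\xi}^{M-\abs{\alpha}-N}$ near the origin, and taking $M$ arbitrarily large gives $\partial^\alpha g(\xi)\to 0$ as $\xi\to 0$ for every $\alpha$. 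An induction on $\abs{\alpha}$, using the calculus fact that a function continuous near a point, $\mathcal{C}^1$ on a punctured neighborhood, whose gradient extends continuously across the point is $\mathcal{C}^1$ there, upgrades these pointwise limits to $g\in \mathcal{C}^\infty(\R^n)$ with $\partial^\alpha g(0)=0$ for all $\alpha$. Tracking constants through the same argument bounds each Schwartz seminorm of $g$ by finitely many Schwartz seminorms of $\widehat{f}$, which yields continuity of $I_m$ on $Z(\R^n)$.

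For the inverse identity, the Fourier transform of $I_{\sigma_w}I_{1/\sigma_w}f$ equals $\sigma_w(\xi)(1/\sigma_w(\xi))\widehat{f}(\xi)=\widehat{f}(\xi)$ for every $\xi\neq 0$, and both sides vanish at $\xi=0$ (the right-hand side because $f\in Z(\R^n)$, the left-hand side by the extension convention), so the two Fourier transforms agree on $\R^n$; inverting gives $I_{\sigma_w}I_{1/\sigma_w}f=f$, and the reverse composition is symmetric. The only genuinely nontrivial ingredient is the regularity analysis of $g$ at the origin; the rest is routine Fourier algebra once the pointwise bounds from Lemma \ref{lem:regularity_sigma} are in hand.
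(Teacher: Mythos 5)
Your proof is correct and follows essentially the same route as the paper's: pass to the Fourier side, use Lemma \ref{lem:regularity_sigma} together with \ref{A3} (and $w\geq 1$ for the case $m=1/\sigma_w$) to control $\partial^\gamma m$ by an at-worst-polynomial singularity at the origin and polynomial growth at infinity, and then exploit the infinite-order vanishing of $\widehat f$ at $0$ via Leibniz to conclude that every derivative of $m\widehat f$ tends to $0$ at the origin and that all Schwartz seminorms are finite, with the inverse identity reducing to $\sigma_w\cdot(1/\sigma_w)=1$ on $\R^n\setminus\{0\}$. Your only genuine addition is spelling out the elementary calculus step that upgrades $\lim_{\xi\to 0}\partial^\alpha g(\xi)=0$ for all $\alpha$ to actual $C^\infty$ smoothness at the origin with $\partial^\alpha g(0)=0$, a detail the paper leaves implicit.
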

\begin{proof}
We will only show the result in the case $m=\sigma_w$, since the case of $m=1/\sigma_w$ is done similarly. It suffices to show that for any $f\in Z(\R^n)$, $F(\xi)=m(\xi)\widehat{f}(\xi) \in \mathcal{S}(\R^n)$ and $\partial^\alpha F(0)=0$ for any multiindex $\alpha$. To this end, observe first that trivially $F\in\mathcal{C}^{\infty}(\R^n\setminus\{0\})$. So, it remains to study what happens at the origin. Since $f\in Z(\R^n)$,
for any $L>0$ and any $\alpha$, $\lim_{\xi \to 0} |{\partial^\alpha\widehat{f}(\xi)}|{\abs{\xi}^{-L}}=0$. Moreover, for any $\xi\neq 0$,  Leibniz's rule, Lemma \ref{lem:regularity_sigma} and \ref{A3} yield
\begin{equation}\label{eq:F}
	\begin{split}
	|{\partial^\alpha F(\xi)}| &\lesssim w(1/\abs{\xi}) \sum\abs{\xi}^{-\abs{\alpha_1}} \abs{\partial^{\alpha_2} \widehat{f}(\xi)}
	\\
	&\lesssim (1+\abs{\xi})^{N} \sum\abs{\xi}^{-\abs{\alpha_1}} \abs{\partial^{\alpha_2} \widehat{f}(\xi)} ,
	\end{split}
\end{equation}
where the sum runs over all the multiindices $\alpha_1+\alpha_2=\alpha$.
This yields $\lim_{\xi\to 0} |{\partial^\alpha F(\xi)}|=0$. Hence, for any $\alpha$, $\partial^\alpha F(\xi)$ can be extend continuously to $0$ by setting $\partial^\alpha F(0):=0$.
Now \eqref{eq:F} yields $\sup_{\abs{\xi}\geq 1}|{\xi^\beta \partial^\alpha F(\xi)}|<+\infty.$\\
On the other hand, the fact that $\sup_{\abs{\xi}\leq 1}|\xi^\beta \partial^\alpha F(\xi)|$ is finite is a consequence of continuity and the compactness of the unit ball.

The last assertion is a direct consequence of the definition of the operators.
\end{proof}

\begin{cor}\label{cor:technic_1} Let $m(\xi)$ be either $\sigma_w$ or $1/\sigma_w$. The functional $I_{m}(u)$ defined by
\[
	I_{m}(u)(f):=u(I_{m} f),\quad \mbox{for any $u\in Z'(\R^n)$, $f\in Z(\R^n)$},
\]
is a continuous, linear, one-to-one mapping of $Z'(\R^n)$ onto itself. Moreover,
\[
	I_ {\sigma_w}I_ {1/\sigma_w}u=I_ {1/\sigma_w}I_ {\sigma_w}u=u,\quad \mbox{for any $u\in Z'(\R^n)$}.
\]
\end{cor}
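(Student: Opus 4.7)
The plan is to derive the statement from the preceding proposition by a standard transpose (adjoint) argument, since all the analytic content — namely that $I_m$ maps $Z(\R^n)$ continuously into itself and that $I_{\sigma_w}I_{1/\sigma_w}f=I_{1/\sigma_w}I_{\sigma_w}f=f$ for every $f\in Z(\R^n)$ — has already been established.

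First I would check that for $u\in Z'(\R^n)$ the prescription $f\mapsto u(I_m f)$ really defines an element of $Z'(\R^n)$. Linearity in $f$ is immediate from the linearity of $I_m$ on $Z(\R^n)$, and continuity follows by composing $u$ (continuous on $Z(\R^n)$) with $I_m\colon Z(\R^n)\to Z(\R^n)$ (continuous by the previous proposition). Linearity of the assignment $u\mapsto I_m(u)$ is a tautology. Equipping $Z'(\R^n)$ with the weak-$*$ topology (or the strong dual topology, with the standard argument using equicontinuity on bounded sets of $Z(\R^n)$), continuity follows from the identity
\[
I_m(u_\alpha)(f)=u_\alpha(I_m f)\longrightarrow u(I_m f)=I_m(u)(f),\qquad f\in Z(\R^n),
\]
whenever $u_\alpha\to u$ in $Z'(\R^n)$.

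Next, the two-sided inverse relation lifts to $Z'(\R^n)$ by a direct computation that pushes the operators onto the test function and invokes the corresponding identity already proved on $Z(\R^n)$:
\[
(I_{\sigma_w}I_{1/\sigma_w}u)(f)=(I_{1/\sigma_w}u)(I_{\sigma_w}f)=u(I_{1/\sigma_w}I_{\sigma_w}f)=u(f),
\]
and symmetrically for the other composition. Injectivity and surjectivity on $Z'(\R^n)$ now follow automatically: if $I_m u=0$ then applying the reciprocal operator $I_{1/m}$ forces $u=0$, and any $v\in Z'(\R^n)$ is the image under $I_m$ of $I_{1/m}v$, which by construction lies in $Z'(\R^n)$.

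There is no real analytic obstacle here; the whole argument is a formal duality transfer, with the substantive work already carried out in the preceding proposition. The only thing requiring a moment's thought is the precise topology used on $Z'(\R^n)$ when asserting continuity, but in either the weak-$*$ or strong-dual topologies the standard adjoint formalism applies verbatim since $I_m$ is already known to be a continuous endomorphism of $Z(\R^n)$.
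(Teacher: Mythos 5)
Your proof is correct and is exactly the standard transpose argument the paper implicitly relies on: the paper states this as a Corollary with no proof, treating it as immediate from the preceding proposition, and your write-up supplies precisely those routine details (continuity of the adjoint, the lifted inverse identities, and bijectivity as a consequence).
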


\begin{prop}\label{prop:boundedness_H1_BMO} The operator $I_ {1/\sigma_w}$ maps $H^1(\R^n)\to H^1(\R^n)$ and $\mathrm{BMO}(\R^n)\to \mathrm{BMO}(\R^n)$ continuously, and
${\rm Ker }_{H^1}\, I_ {1/\sigma_w}\equiv\{0\}.$
\end{prop}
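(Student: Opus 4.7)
The plan is to obtain all three assertions by combining Lemma \ref{lem:regularity_sigma}, which tells us that $1/\sigma_w$ is a bounded Hörmander–Mikhlin multiplier, with classical multiplier/duality theory and with the $Z'$-injectivity already established in Corollary \ref{cor:technic_1}.

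For the $H^1(\R^n)\to H^1(\R^n)$ boundedness I would simply invoke the classical Hörmander–Mikhlin theorem on $H^1$, namely \cite{MR807149}*{Thm. III.7.30}, which was already used in the proof of Proposition \ref{prop:independency}. There is nothing to compute: the hypotheses are precisely what Lemma \ref{lem:regularity_sigma}\ref{eq:two} provides. For the $\BMO(\R^n)\to\BMO(\R^n)$ boundedness I would invoke Fefferman–Stein duality $(H^1)^*=\BMO$. Since $\sigma_w$ is real and strictly positive by its very definition \eqref{eq:sigma}, so is $1/\sigma_w$, and consequently the formal adjoint of the multiplier $I_{1/\sigma_w}$ with respect to the standard $H^1$–$\BMO$ pairing coincides with $I_{1/\sigma_w}$ itself. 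Dualising the $H^1$ estimate then yields the $\BMO$ estimate with the same operator-norm bound.

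For the triviality of the kernel on $H^1$ I would argue as follows. Suppose $f\in H^1(\R^n)$ satisfies $I_{1/\sigma_w}f=0$. Since $H^1\hookrightarrow \mathcal{S}'$, the identity also holds in $\mathcal{S}'$, and hence the natural image $[f]$ of $f$ in $Z'(\R^n)=\mathcal{S}'/\mathcal{P}$ satisfies $I_{1/\sigma_w}[f]=0$ in $Z'(\R^n)$. By Corollary \ref{cor:technic_1}, the operator $I_{1/\sigma_w}$ is one-to-one on $Z'(\R^n)$, so $[f]=0$; that is, $f$ coincides with a polynomial in $\mathcal{S}'$. Since $H^1(\R^n)\subset L^1(\R^n)$ and the only integrable polynomial is $0$, it follows that $f=0$ in $H^1(\R^n)$.

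The main—and really only—point that deserves a brief comment is the compatibility between the $H^1$ realisation of the multiplier $I_{1/\sigma_w}$ and the $Z'$ realisation of Corollary \ref{cor:technic_1}. This is immediate because both operators are defined by Fourier-side multiplication by the same bounded function $1/\sigma_w$, so the passage to the quotient $\mathcal{S}'/\mathcal{P}$ is manifestly consistent. Beyond this observation, the proof is a direct combination of Lemma \ref{lem:regularity_sigma}, the classical Hörmander–Mikhlin theorem on $H^1$, Fefferman–Stein duality and Corollary \ref{cor:technic_1}.
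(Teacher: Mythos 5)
Your proof is correct and follows essentially the same route as the paper: $H^1$-boundedness via the Hörmander--Mikhlin theorem from \cite{MR807149}*{Thm. III.7.30} together with Lemma~\ref{lem:regularity_sigma}, $\BMO$-boundedness by self-adjointness and $H^1$--$\BMO$ duality, and kernel triviality from the $Z'$-bijectivity in Corollary~\ref{cor:technic_1} combined with $H^1\subset L^1$. The extra remark you make about the compatibility of the $H^1$ and $Z'$ realisations of the multiplier is a reasonable clarification but is not a departure from the paper's argument.
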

\begin{proof} The boundedness of the operator $I_ {1/\sigma_w}$ on $H^1$ is a consequence of the boundedness of H\"ormander-Mikhlin multipliers on these spaces. More specifically, the boundedness on $H^1$ can be found in \cite{MR807149}*{Thm. III.7.30}, and using that together with the self-adjointness of $I_ {1/\sigma_w}$ and duality, we obtain the boundedness on BMO.

On the other hand,  since $I_ {1/\sigma_w}$ is bijective in $Z'(\R^n)$ and $H^1(\R^n)\subset Z'(\R^n)$, Corollary \ref{cor:technic_1} and the characterisation of $Z'(\R^n)$ yield that if for $f\in H^1(\R^n)$, ${ I_ {1/\sigma_w} f}=0,$ then $f=0$ (modulo polynomials) i.e. $f=P$ where $P$ is a polynomial. Now since $H^1\subset L^1$, it follows that $P=0$ and therefore  $I_ {1/\sigma_w}$ restricted to $H^1(\R^n)$ is injective.
\end{proof}

The previous proposition allows us to define the following Hardy-Sobolev-type space, $H^1_{1/\sigma_w}(\R^n)$.
\begin{defn}\label{defn:hardy_sobolev} We shall denote by $H^1_{1/\sigma_w}(\R^n)$ the space
\[
	H^1_{1/\sigma_w}(\R^n):=\set{f\in H^1(\R^n):\, f=I_{1/\sigma_w} h,\, h\in H^1(\R^n)},
\]
endowed with the norm
\[
	\norm{f}_{H^1_{1/\sigma_w}(\R^n)}=\norm{h}_{H^1(\R^n)}.
\]
\end{defn}

\begin{prop} The space $(H^1_{1/\sigma_w},\norm{\cdot}_{H^1_{1/\sigma_w}})$ is a Banach space.
\end{prop}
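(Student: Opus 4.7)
The plan is to recognise $H^1_{1/\sigma_w}$ as an isometric copy of $H^1$ under the map $I_{1/\sigma_w}$, and deduce completeness from that of $H^1$. Concretely, one checks that $T:=I_{1/\sigma_w}\big|_{H^1}\colon H^1(\R^n)\to H^1_{1/\sigma_w}(\R^n)$ is a bijective linear isometry, and then completeness transfers automatically.

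First I would verify that the norm is well-defined. By Definition \ref{defn:hardy_sobolev}, for every $f\in H^1_{1/\sigma_w}$ there is at least one $h\in H^1$ with $f=I_{1/\sigma_w}h$. If $h_1,h_2\in H^1$ both satisfy $I_{1/\sigma_w}h_1=I_{1/\sigma_w}h_2$, then $I_{1/\sigma_w}(h_1-h_2)=0$, and Proposition \ref{prop:boundedness_H1_BMO} (triviality of $\mathrm{Ker}_{H^1} I_{1/\sigma_w}$) forces $h_1=h_2$. Thus the preimage $h$ is unique and $\norm{f}_{H^1_{1/\sigma_w}}:=\norm{h}_{H^1}$ is unambiguous. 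Linearity of $I_{1/\sigma_w}$ then makes $H^1_{1/\sigma_w}$ a vector subspace of $H^1$, and the uniqueness of preimages makes $T$ a linear bijection; by construction $\norm{Th}_{H^1_{1/\sigma_w}}=\norm{h}_{H^1}$, so positivity, absolute homogeneity and the triangle inequality for $\norm{\cdot}_{H^1_{1/\sigma_w}}$ are inherited from those of $\norm{\cdot}_{H^1}$.

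For completeness, take a Cauchy sequence $(f_n)\subset H^1_{1/\sigma_w}$ and let $h_n\in H^1$ be the unique preimages. Since $T$ is an isometry, $(h_n)$ is Cauchy in $H^1(\R^n)$, which is a Banach space; hence $h_n\to h$ for some $h\in H^1$. Setting $f:=I_{1/\sigma_w}h$, Proposition \ref{prop:boundedness_H1_BMO} guarantees that $f\in H^1(\R^n)$, so $f\in H^1_{1/\sigma_w}(\R^n)$, and
\[
\norm{f_n-f}_{H^1_{1/\sigma_w}}=\norm{h_n-h}_{H^1}\longrightarrow 0.
\]
This yields the desired convergence in $H^1_{1/\sigma_w}$.

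There is essentially no real obstacle beyond the bookkeeping: the only nontrivial ingredient is the well-definedness of the norm, which rests precisely on the injectivity of $I_{1/\sigma_w}$ on $H^1$ established in Proposition \ref{prop:boundedness_H1_BMO}. Once that is in hand, the Banach space structure is simply transported along the isometry $T$.
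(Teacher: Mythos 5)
Your proof is correct and follows essentially the same route as the paper: identify $H^1_{1/\sigma_w}$ as an isometric copy of $H^1$ under $I_{1/\sigma_w}$ and transport completeness. You are more explicit than the paper on one point the paper glosses over as ``easy to see'': that the norm is well-defined, which genuinely rests on the injectivity of $I_{1/\sigma_w}$ on $H^1$ from Proposition~\ref{prop:boundedness_H1_BMO} --- spelling this out is a welcome improvement. (Incidentally, the paper's proof contains a small slip, writing $f=I_{\sigma_w}h$ where $f=I_{1/\sigma_w}h$ is clearly intended; your version has it right.)
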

\begin{proof} It is easy to see that $\norm{\cdot}_{H^1_{1/\sigma_w}}$ is a norm. For proving the completeness one observes that for any Cauchy-sequence $\{f_n\}_n\subset H^1_{1/\sigma_w}$, there exists a Cauchy-sequence $\{h_n\}_n \subset H^1(\R^n)$ such that, for any $n\geq 1$, $I_{1/\sigma_w} h_n=f_n$. By completeness of $H^1(\R^n)$, there exists $h\in H^1(\R^n)$ such that $\lim_n h_n=h$. Then, defining $f=I_{\sigma_w} h\in H^1_{1/\sigma_w}$ and, by the linearity of $I_{1/\sigma_w}$ one has $\lim_n \norm{f_n-f}_{H^1_{1/\sigma_w}}=0$.
\end{proof}

The previous result and the fact that $I_ {1/\sigma_w}$ is an isomorphism in $Z(\R^n)$ implies that
$$
	Z(\R^n)\subset H^1_{1/\sigma_w}(\R^n)\subset H^1(\R^n).
$$

 In the following proposition, we show that the space $H^1_{1/\sigma_w}(\R^n)$ is well-defined. By this we mean that the definition \ref{defn:hardy_sobolev} above depends only on $w$ and not on the underlying function $\psi$, in the sense that different choices of $\psi$ induce equivalent norms.
\begin{prop}
Let $\psi_1$ and $\psi_2$ be as above, and let $\sigma_w^1$ and $\sigma_w^2$ be the corresponding symbols defined as in \eqref{eq:sigma}. Then $H^1_{1/\sigma_w^1}(\R^n)=H^1_{1/\sigma_w^2}(\R^n)$ and for any $f\in H^1_{1/\sigma_w^1}(\R^n)$
\[
	\norm{f}_{H^1_{1/\sigma_w^1}(\R^n)}\thickapprox
	\norm{f}_{H^1_{1/\sigma_w^2}(\R^n)},
 \]
 with constants depending on $\psi_1$ and $\psi_2$.
\end{prop}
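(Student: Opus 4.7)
The plan is to exploit Proposition \ref{prop:independency}, which already says that the multiplier with symbol $\sigma_w^1/\sigma_w^2$ (and by symmetry $\sigma_w^2/\sigma_w^1$) is bounded and invertible on $H^1(\R^n)$. The rest is essentially bookkeeping about the definition of $H^1_{1/\sigma_w^j}$.

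Concretely, I would start with $f\in H^1_{1/\sigma_w^1}(\R^n)$, writing $f=I_{1/\sigma_w^1} h_1$ with $h_1\in H^1(\R^n)$ and $\norm{f}_{H^1_{1/\sigma_w^1}}=\norm{h_1}_{H^1}$. The natural candidate for a preimage under $I_{1/\sigma_w^2}$ is the function
\[
h_2:=\brkt{\frac{\sigma_w^2}{\sigma_w^1}}(D)h_1,
\]
which belongs to $H^1(\R^n)$ by Proposition \ref{prop:independency} with $\norm{h_2}_{H^1}\thickapprox \norm{h_1}_{H^1}$.

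Next I would verify that $I_{1/\sigma_w^2}h_2=f$. This is the one place where a small argument is needed: the identity $I_{1/\sigma_w^2}\,I_{\sigma_w^2/\sigma_w^1}=I_{1/\sigma_w^1}$ is transparent on $Z(\R^n)$ (and hence on $Z'(\R^n)$ by duality, in view of the previous proposition and Corollary \ref{cor:technic_1}), but one needs to know it still holds when applied to elements of $H^1(\R^n)$. Since $H^1(\R^n)\subset Z'(\R^n)$ and all operators involved are continuous on $Z'(\R^n)$ and agree with the corresponding $H^1$-bounded multiplier operators (via Lemma \ref{lem:regularity_sigma} and the H\"ormander--Mikhlin theorem), the identity transfers, and therefore $f\in H^1_{1/\sigma_w^2}(\R^n)$ with
\[
\norm{f}_{H^1_{1/\sigma_w^2}}=\norm{h_2}_{H^1}\lesssim \norm{h_1}_{H^1}=\norm{f}_{H^1_{1/\sigma_w^1}}.
\]
Exchanging the roles of $\psi_1$ and $\psi_2$ gives the reverse estimate, and both inclusions of sets follow at once.

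The main (mild) obstacle is the consistency step: making sure that the algebraic relation $I_{1/\sigma_w^2}\circ I_{\sigma_w^2/\sigma_w^1}=I_{1/\sigma_w^1}$ survives the passage from the natural domain $Z'(\R^n)$ to the concrete subspace $H^1(\R^n)$, so that $h_2$ really is a legitimate witness for membership of $f$ in $H^1_{1/\sigma_w^2}(\R^n)$. Once this is checked, the proof reduces to the $H^1$-boundedness of the ratio multiplier already proved in Proposition \ref{prop:independency}.
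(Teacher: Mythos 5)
Your proof is correct and takes essentially the same route as the paper: write $f=I_{1/\sigma_w^1}h$, observe $f=I_{1/\sigma_w^2}\big(I_{\sigma_w^2/\sigma_w^1}h\big)$, and invoke Proposition~\ref{prop:independency} to get norm equivalence, then symmetrize. The only difference is that you pause to justify the composition identity $I_{1/\sigma_w^2}\circ I_{\sigma_w^2/\sigma_w^1}=I_{1/\sigma_w^1}$ on $H^1$ via the embedding $H^1\subset Z'$ and Corollary~\ref{cor:technic_1}, a consistency step the paper simply takes for granted; this is extra care, not a divergence in method.
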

\begin{proof} Let $f\in H^1_{1/\sigma_w^1}(\R^n)$ and let $h\in H^1(\R^n)$ such that  $f=I_{1/\sigma_w^1} h$. Then
\[
	f=I_{1/\sigma_w^2} (I_{\sigma_w^2/\sigma_w^1} h),
\]
 and by  Definition \ref{defn:hardy_sobolev} and Proposition
\ref{prop:independency} one has
 \[
 	\norm{f}_{H^1_{1/\sigma_w^2}(\R^n)}=\norm{I_{\sigma_w^2/\sigma_w^1} h}_{H^1(\R^n)}\thickapprox \norm{h}_{H^1(\R^n)}=
 	\norm{f}_{H^1_{1/\sigma_w^1}(\R^n)}.
 \]
\end{proof}
From the definition of the space $H^1_{1/\sigma_w}(\R^n)$, it follows that $I_{1/\sigma_w}$ is an isometric isomorphism between $H^1(\R^n)$ and $H^{1}_{1/\sigma_w}(\R^n)$. The inverse of $I_{1/\sigma_w}$ is obtained by its restriction to $H^{1}_{1/\sigma_w}(\R^n)$ and is denoted by $S_{\sigma_w}$. This implies that the corresponding dual spaces are isomorphic. More precisely, for any $\Lambda\in (H^1_{1/\sigma_w}(\R^n))^*$ there exists a unique (modulo constants) $b_\Lambda \in \BMO$ given by  $b_\Lambda=I_{1/\sigma_w}\Lambda$, such that for any $h\in H^1(\R^n)$
 \[
 	 \esc{\Lambda, I_{1/\sigma_w} h}=\esc{b_\Lambda,h},
 \]
 and conversely, for any $b\in \BMO$ there exists $S_{\sigma_w}(b):=\Lambda_b\in  (H^1_{1/\sigma_w}(\R^n))^*$ such that
  \[
 	 \esc{\Lambda_b, I_{1/\sigma_w} h}=\esc{b,h}.
 \]
 This motivates the following definition:
 \begin{defn}\label{defn: BMO_w} We define the space $\BMO_{\sigma_w}$ as
\begin{equation*}
	\BMO_{{\sigma_w}}:=\set{\Lambda\in \mathcal{S}'(\R^n):\quad \norm{I_{1/\sigma_w}\Lambda}_{\BMO}<+\infty},
\end{equation*}
and equip it with the norm
\[
	\norm{\Lambda}_{\BMO_{{\sigma_w}}}:=
	\norm{I_{1/\sigma_w}\Lambda}_{\BMO}.
\]
\end{defn}
As such, one has that $(H^1_{1/\sigma_w}(\R^n))^*=\BMO_{\sigma_w}$. { Moreover, Proposition \ref{prop:boundedness_H1_BMO} yields that $\BMO\subset \BMO_{\sigma_w}$. }

\section{Estimates for localisation operators }\label{local estimates}
Let $\phi, \psi$ be Schwartz class functions, with spectrum included  in a ball and an annulus around $0$ respectively, and $\int \phi(x)\dd x\neq 0$. For any $0<t<\infty$, one defines the frequency localisation operators $P_t$ and $Q_t$ as
\begin{equation}\label{eq:pt_qt}
	 P_t f=\widehat{\phi}(tD) f, \qquad Q_t f=\widehat{\psi}(tD) f.
\end{equation}
In what follows, we shall make use of the following proposition.
\begin{prop}%
There exists a constant $C$ such that for any $f\in \BMO$ and any $t>0$
\begin{equation}\label{eq:Q1}
	\norm{Q_t f}_{L^\infty}\leq C \norm{f}_{\BMO}.
\end{equation}
Moreover, for any $f\in \bmo$
\begin{equation}\label{eq:Pt}
	\norm{P_t f}_{L^\infty}\lesssim \brkt{1+\log_+ \frac{1}{t}} \norm{f}_{\bmo}.
\end{equation}
 \end{prop}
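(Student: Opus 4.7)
The plan is to derive both estimates from a single uniform principle: for every $K \in \mathcal{S}(\R^n)$ with $\int K = 0$, the $L^1$-preserving dilations $K_t(x) := t^{-n}K(x/t)$ satisfy
\[
    \norm{K_t * f}_{L^\infty} \lesssim \norm{f}_{\BMO}
\]
with a constant independent of $t > 0$. To see this, I would write, for fixed $x$, $(K_t * f)(x) = \esc{K_t(x-\cdot),f}$; the function $y \mapsto K_t(x-y)$ is a translation-reflection of $K_t$, and both operations preserve the $H^1$-norm, which is itself dilation invariant because $\norm{\cdot}_{L^1}$ and the Riesz-transform $L^1$-norms are. Hence $\norm{K_t(x-\cdot)}_{H^1} = \norm{K}_{H^1} < +\infty$, and the estimate follows from $H^1$-$\BMO$ duality, the pairing being insensitive to adding constants to $f$ because $\int K_t = 0$.

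Estimate \eqref{eq:Q1} is then immediate. Indeed, since $\widehat\psi$ is supported in an annulus avoiding the origin, $\widehat\psi(0) = 0$, so $\psi$ is a mean-zero Schwartz function and $Q_t f = \psi_t * f$, to which the uniform principle applies.

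For \eqref{eq:Pt}, I would split into the ranges $t \geq 1$ and $0 < t < 1$, since $1 + \log_+ 1/t$ equals $1$ on the first range and grows on the second. In the range $t \geq 1$, I would exploit the freedom in choosing the cutoff $\widehat\Theta$ of \eqref{eq:bmo} and pick $\widehat\Theta \equiv 1$ on the (fixed, compact) support of $\widehat\phi$; since the support of $\widehat\phi(t\,\cdot)$ shrinks as $t$ grows, for $t \geq 1$ it remains inside the support of $\widehat\phi$, and consequently $\widehat\phi(t\xi) = \widehat\phi(t\xi)\widehat\Theta(\xi)$. Thus $P_t f = P_t(\widehat\Theta(D)f)$, and Young's inequality yields $\norm{P_t f}_{L^\infty} \leq \norm{\phi}_{L^1}\norm{\widehat\Theta(D)f}_{L^\infty} \lesssim \norm{f}_{\bmo}$.

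For $0 < t < 1$, I would use the fundamental theorem of calculus to write
\[
    P_t f(x) = P_1 f(x) - \int_t^1 \frac{d}{du} P_u f(x)\,du,
\]
and compute on the Fourier side that $\frac{d}{du} P_u f = u^{-1} R_u f$, where $R_u f = \chi_u * f$ with $\widehat\chi(\eta) := \eta \cdot \nabla \widehat\phi(\eta)$. Since $\widehat\chi$ is $\mathcal{C}^\infty_c$ with $\widehat\chi(0) = 0$, the kernel $\chi$ is Schwartz with vanishing mean, and the uniform principle gives $\norm{R_u f}_{L^\infty} \lesssim \norm{f}_{\BMO}$ uniformly in $u$. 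Combined with the bound on $\norm{P_1 f}_{L^\infty}$ from the previous step,
\[
    \norm{P_t f}_{L^\infty} \lesssim \norm{f}_{\bmo} + \norm{f}_{\BMO} \int_t^1 \frac{du}{u} \lesssim \brkt{1 + \log_+ 1/t} \norm{f}_{\bmo}.
\]
The main obstacle is really the uniformity in $t$ of the constant in the abstract $\BMO \to L^\infty$ principle; this is transparent via $H^1$-$\BMO$ duality combined with scale invariance of $H^1$, but would be cumbersome if attacked directly by dyadic decomposition and John-Nirenberg. A secondary technical point is justifying the differentiation under the Fourier integral defining $P_u f$, which I would first carry out on Schwartz functions and then extend by the continuity properties established earlier in Section \ref{functionspaces}.
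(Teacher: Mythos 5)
Your proof is correct and takes essentially the same route as the paper: the same fundamental-theorem-of-calculus decomposition with the kernel $\widehat{\chi}(\eta)=\eta\cdot\nabla\widehat{\phi}(\eta)$ (the paper's $\widehat{\Psi}$), and the same Young's-inequality argument for $t\geq 1$. The one stylistic difference is that you package the key $\BMO\to L^\infty$ estimate as a general principle for mean-zero Schwartz kernels (via $H^1$--$\BMO$ duality and the translation/dilation invariance of the $H^1$-norm), which cleanly covers both \eqref{eq:Q1} and the intermediate kernel $\chi$ and lets you bypass the paper's preliminary reduction to the case $\widehat{\phi}\equiv 1$ near the origin.
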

 
\begin{proof}
The statement \eqref{eq:Q1} is a classical result and could be found in \cite{S}*{p. 161}. %
So it remains to prove \eqref{eq:Pt}. Without loss of generality we can assume that $\widehat{\phi}$ is equal to one in a neighbourhood of the origin. If not, we can find a compactly supported smooth function function $\widehat{\varphi}$ such that is equal to one on the support of $\widehat{\phi}$. In this way we can write $P_t f=P_t \tilde{P_t} f$, where $\tilde{P_t} f=\widehat{\varphi}(tD) f$, which yields
\[
	\norm{P_t \tilde{P_t} f}_{L^\infty}\leq \int \abs{\phi(x)}\dd x \norm{\tilde{P_t} f}_{L^\infty}.
\]
So, it would be sufficient to prove the result for $\tilde{P_t} f$.

   Observe that for any $0<t<1$
\[
	P_1f(x)-P_tf(x)=\int_t^1 \int  \nabla_\xi \widehat{\phi}(s\xi)\cdot (s\xi) \widehat{f}(\xi)\, e^{ix\cdot \xi}\, \ddd \xi \frac{\dd s}{s}.
\]
If we now set  $\widehat{\Psi}(\xi)=\nabla_\xi\widehat{\phi}(\xi)\cdot \xi$, then  $\widehat{\Psi}$ is a smooth function supported in an annulus around the origin, because $\widehat{\phi}$ is compactly supported { and equal to one in a neighbourhood of the origin}. Then we have that
\[
	P_1f(x)-P_tf(x)=\int_t^1 \widehat{\Psi}(sD) f(x)\frac{\dd s}{s}.
\]
Therefore, using \eqref{eq:bmo} and \eqref{eq:Q1} we obtain
\[
	\norm{P_t f}_{L^\infty}\leq \norm{P_1 f}_{L^\infty}-\log t \norm{f}_{\BMO}\leq (1-\log t)\norm{f}_{\bmo}.
\]
On the other hand, for $t\geq 1$, $\widehat{\phi}(t\xi)$ is supported in a fixed ball independent of $t$. So, we can find an smooth compactly supported  function $\widehat{\Theta}(\xi)$ equal to one in a neighbourhood of the origin such that $P_t f=P_t(\widehat{\Theta}(D)f)$. Thus Minkowskii's inequality and \eqref{eq:bmo} yield
\[
	\norm{P_t f}_{L^\infty}\lesssim \norm{\widehat{\Theta}(D)f}_{L^\infty}\lesssim \norm{f}_{\bmo}.
\]
\end{proof}

\begin{defn}\label{defn: X_w}
For any admissible weight $w$ we shall denote by $X_w$ the space of functions $f\in \BMO$ such that
\begin{equation}\label{eq:X_w}
	\norm{f}_{X_w}:=\norm{f}_{\BMO}+\sup_{t>0} \frac{\norm{P_t f}_{L^\infty}}{w(t)}<+\infty.
\end{equation}
\end{defn}

Note that $X_w$ is nontrivial as it clearly contains $L^\infty$. Also $X_w\subset \BMO$. Moreover, since
\begin{equation}\label{eq:lower_bound}
	\norm{f}_{X_w}\geq \norm{f}_\BMO+\frac{\norm{P_1 f}_{L^\infty}}{w(1)}\thickapprox \norm{f}_{\bmo},
\end{equation}
it follows from \eqref{eq:bmo} that $X_w$ is embedded in  $\bmo$.

Here we shall also observe that for any $f\in \BMO$,
\begin{equation} \label{eq:averages}
	\sup_t \norm{P_tf- c{ \rm Avg}_{B_t(\cdot)} f}_{L^\infty}\lesssim \norm{f}_{\BMO},
\end{equation}
where ${ \rm Avg}_{B_t(x)} f=\frac{1}{\abs{B_t(x)}}\int_{B_t(x)} f(y)\dd y$, $B_t(x)$ is the ball centered at $x$ of radius $t$ and $c=\int \phi(x)\dd x$. The proof of \eqref{eq:averages} is similar to that of \cite{G}*{Prop. 7.1.5.ii}. An easy consequence of this and the fact that $w(t)\geq 1$ is that
\[
	\norm{f}_{X_w}\thickapprox \norm{f}_{\BMO}+\sup_{t>0} \frac{\norm{{ \rm Avg}_{B_t(\cdot)} f}_{L^\infty}}{w(t)}<+\infty.
\]
In this way, we see that the definition of the class $X_w$ does not depend on the different choices of function $\phi$ associated to $P_t$, in the sense that different choices of $\phi$ induce equivalent norms.
Moreover, this expression allows us to rewrite
\begin{equation*} %
	\sup_{t>0} \frac{\norm{{ \rm Avg}_{B_t(\cdot)} f}_{L^\infty}}{w(t)}=\sup  \frac{1}{w(r(B))) \abs{B}}\abs{\int_{B} f(x)\dd x},
\end{equation*}
where $r(B)$ is the radius of the ball $B$ and the supremum is taken over the family of all euclidean balls in $\R^n$.
Let us recall that  for $\eta\colon [0,\infty) \rightarrow
[0,\infty)$, the Morrey space $\mathcal{M}^{1,\eta}$ is the class of all locally integrable functions $f$, for which
$$
	\left\|f\right\|_{1,\eta} = \sup \frac{\eta(r(B))}{|B|}\int_{B}\left|f(x)\right|\,\dd x<+\infty,
$$
see  e.g. \cite{MR2833565} and the references therein. Therefore, we have just proved that the class $X_w$ contains the space $\mathcal{M}^{1,\frac{1}{w}}\cap \BMO$.

As a consequence of \eqref{eq:Pt}, \eqref{eq:X_w}, \eqref{eq:lower_bound} we have the following alternative  characterisations of $\bmo$:
\begin{thm}\label{cor:bmo} For $w=1+\log_+ \frac{1}{t}$, we have $X_w=\bmo$ and
\[
	\begin{split}
	\norm{f}_\bmo &\thickapprox \norm{f}_\BMO+ \sup_{t>0} \frac{\norm{P_t f}_{L^\infty}}{1+\log_+ \frac{1}{t}}\thickapprox\norm{f}_{\BMO}+\sup  \frac{1}{w(r(B))) \abs{B}}\abs{\int_{B} f(x)\dd x}.
	\end{split}
\]
\end{thm}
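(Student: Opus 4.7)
The plan is to assemble the theorem from three ingredients already present in the text: the upper bound on $\|P_t f\|_{L^\infty}$ in \eqref{eq:Pt}, the lower bound on $\|f\|_{X_w}$ in \eqref{eq:lower_bound}, and the approximation of $P_t f$ by ball averages in \eqref{eq:averages}. With $w(t)=1+\log_+(1/t)$, all three are tailored exactly to the statement, so the argument amounts to reading off the appropriate chain of inequalities.

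First, I would prove $X_w=\bmo$ with equivalent norms. For the inclusion $\bmo\hookrightarrow X_w$, take $f\in\bmo$. Then $\|f\|_\BMO\leq\|f\|_\bmo$ by \eqref{eq:bmo}, and \eqref{eq:Pt} gives
\[
\frac{\|P_t f\|_{L^\infty}}{w(t)}\lesssim \|f\|_\bmo \quad\text{for every }t>0,
\]
so after taking the supremum on $t$ and adding the BMO part, one gets $\|f\|_{X_w}\lesssim \|f\|_\bmo$. For the reverse inclusion $X_w\hookrightarrow \bmo$, the lower bound \eqref{eq:lower_bound} (in which the comparison $w(1)\thickapprox 1$ is harmless) immediately produces $\|f\|_\bmo\lesssim \|f\|_{X_w}$. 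Combining the two directions settles the first equivalence.

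For the second equivalence, I would invoke \eqref{eq:averages}: if $c=\int\phi(x)\,\dd x\neq 0$, then $\|P_t f - c\,\mathrm{Avg}_{B_t(\cdot)}f\|_{L^\infty}\lesssim \|f\|_\BMO$. Dividing by $w(t)\geq 1$ and taking the supremum in $t$, I can replace $\|P_t f\|_{L^\infty}/w(t)$ by $\|\mathrm{Avg}_{B_t(\cdot)}f\|_{L^\infty}/w(t)$ up to an additive multiple of $\|f\|_\BMO$. Finally, as already observed in the text,
\[
\sup_{t>0}\frac{\|\mathrm{Avg}_{B_t(\cdot)}f\|_{L^\infty}}{w(t)}=\sup_B\frac{1}{w(r(B))\,|B|}\left|\int_B f(x)\,\dd x\right|,
\]
the supremum running over all Euclidean balls $B\subset\R^n$. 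Putting these together yields the second equivalence in the statement.

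There is no genuine obstacle here; the theorem is an immediate corollary of the preceding proposition and the geometric reformulation \eqref{eq:averages}. The only point that requires a brief justification is that the specific choice $w(t)=1+\log_+(1/t)$ is admissible (already checked in the examples of admissible weights) and that this $w$ is precisely the function appearing in \eqref{eq:Pt}, which is what makes the upper bound $\|P_t f\|_{L^\infty}/w(t)\lesssim \|f\|_\bmo$ uniform in $t$.
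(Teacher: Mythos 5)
Your proof is correct and follows exactly the line the paper intends: the paper states the theorem as an immediate consequence of \eqref{eq:Pt}, \eqref{eq:X_w}, and \eqref{eq:lower_bound}, together with the ball-average reformulation from \eqref{eq:averages} discussed just above. You have simply written out the two inclusions and the replacement of $P_t f$ by averages in the same order and with the same ingredients.
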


\begin{rem}
For $w(t)=1+\log_+1/t$, we have $\mathcal{M}^{1,\frac{1}{w}}\cap \BMO \subset \bmo$ as a consequence of Corollary \ref{cor:bmo}.
\end{rem}
\section{Boundedness of bilinear Paraproducts}\label{paraproducts}
\subsection{Boundedness of constant coefficient paraproducts}

\begin{defn} Let $P_t$ and $Q_t$ be defined as in \eqref{eq:pt_qt}. A bilinear (constant coefficient) paraproduct $\Pi$ on $\R^d$ is a bilinear operator of the form
$$ \Pi(f,g)(x):= \int_0^\infty Q_t f(x)\, P_tg(x) m(t)\frac{\dd t}{t},$$
where $m$ is a bounded measurable function.
\end{defn}

 These paraproducts are of particular interest when dealing with bilinear Coifman-Meyer multipliers (see Section \ref{kato-ponce}).

\begin{thm}\label{thm:main} For any admissible weight $w$ we have
\[
	\norm{\Pi (f,g)}_{\BMO_{\sigma_w}} \leq C \norm{f}_{\BMO}\norm{g}_{X_w}.
\]
\end{thm}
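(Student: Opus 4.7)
The plan is to use Definition~\ref{defn: BMO_w} to reduce the problem to
\[
\norm{I_{1/\sigma_w}\Pi(f,g)}_{\BMO} \lesssim \norm{f}_{\BMO}\,\norm{g}_{X_w},
\]
and then to argue by $H^1$--$\BMO$ duality after absorbing $I_{1/\sigma_w}$ into a weight-normalised Littlewood--Paley operator, thereby reducing matters to a classical paraproduct estimate.

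The first step is to exploit the spectral structure of $Q_tf \cdot P_tg$. Since $\widehat{\psi}$ is supported in an annulus and $\widehat{\phi}$ in a small ball around the origin, the Fourier transform of $Q_tf \cdot P_tg$ is supported in a fixed annulus at scale $1/t$. Fix a Schwartz function $\widetilde{\psi}$ with $\widehat{\widetilde{\psi}}$ compactly supported in a slightly larger annulus and identically $1$ on this set, and set $\widetilde{Q}_t := \widehat{\widetilde{\psi}}(tD)$, so that $\widetilde{Q}_t(Q_tf \cdot P_tg) = Q_tf \cdot P_tg$. Letting $h_t := P_tg/w(t)$, which satisfies $\norm{h_t}_{L^\infty}\leq \norm{g}_{X_w}$ by Definition~\ref{defn: X_w}, and denoting by $R_t$ the Fourier multiplier with symbol $\mu_t(\xi) := w(t)\,\widehat{\widetilde{\psi}}(t\xi)/\sigma_w(\xi)$, a direct computation (moving $I_{1/\sigma_w}$ inside the $t$--integral and commuting with $\widetilde{Q}_t$) yields
\[
I_{1/\sigma_w}\Pi(f,g)(x) = \int_0^\infty R_t\bigl(Q_tf \cdot h_t\bigr)(x)\,m(t)\,\frac{\dd t}{t}.
\]
The decisive feature of $\mu_t$ is that, by \eqref{eq:sigma equiv weight} and Corollary~\ref{cor:equivalence}, one has $\sigma_w(\xi)\approx w(t)$ on the support of $\widehat{\widetilde{\psi}}(t\,\cdot)$, so the prefactor $w(t)$ exactly cancels this blow-up; together with Lemma~\ref{lem:regularity_sigma} and the Leibniz rule this shows that $\mu_t$ is supported in an annulus at scale $1/t$ and satisfies the H\"ormander--Mikhlin estimates $\abs{\partial_\xi^\alpha \mu_t(\xi)}\lesssim\abs{\xi}^{-\abs{\alpha}}$ uniformly in $t>0$.

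For the last step, I pair with an arbitrary $u\in H^1$ and transfer $R_t$ to the other side to obtain
\[
\bigl\langle I_{1/\sigma_w}\Pi(f,g),\,u\bigr\rangle = \int_0^\infty\!\!\int Q_tf(x)\,h_t(x)\,m(t)\,R_t^{*}u(x)\,\dd x\,\frac{\dd t}{t}.
\]
Using the pointwise bound $\abs{h_t(x)\,m(t)}\leq \norm{m}_{L^\infty}\norm{g}_{X_w}$ and the $T^\infty_2$--$T^1_2$ tent-space duality of Coifman--Meyer--Stein, it suffices to verify $\bigl\|(Q_tf)_{t>0}\bigr\|_{T^\infty_2}\lesssim \norm{f}_\BMO$ and $\bigl\|(R_t^{*}u)_{t>0}\bigr\|_{T^1_2}\lesssim \norm{u}_{H^1}$. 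The first is the classical Carleson-measure characterisation of $\BMO$, while the second follows from the uniform Mikhlin control of $R_t^{*}$ and its annular frequency localisation, which dominate the non-tangential square function of $R_t^{*}u$ by the standard Littlewood--Paley square function of $u$, and hence by $\norm{u}_{H^1}$. The main obstacle is precisely the uniform multiplier estimate on $\mu_t$: this is where the admissibility axioms \ref{A1}--\ref{A4} and Lemma~\ref{lem:regularity_sigma} enter essentially, while the rest of the argument is routine tent-space machinery.
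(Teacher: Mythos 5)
Your proof rests on the claim that the Fourier transform of $Q_t f \cdot P_t g$ is supported in a fixed annulus at scale $1/t$, and that consequently $\mu_t(\xi) = w(t)\,\widehat{\widetilde{\psi}}(t\xi)/\sigma_w(\xi)$ obeys H\"ormander--Mikhlin bounds uniformly in $t$. Neither holds under the paper's hypotheses. The paper assumes only that $\widehat{\phi}$ is supported in a ball, and in the proof of Theorem~\ref{thm:main} this ball is taken to have radius $\beta$, the same as the outer radius of the annulus supporting $\widehat{\psi}$. The Fourier support of $Q_t f\cdot P_t g$ is therefore the sumset $\{\alpha/t\leq\abs{\xi}\leq\beta/t\}+\{\abs{\eta}\leq\beta/t\}$, which is a full ball $\{\abs{\zeta}\leq 2\beta/t\}$ containing a neighbourhood of the origin. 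To reproduce $Q_t f\cdot P_t g$ you must then take $\widehat{\widetilde{\psi}}\equiv 1$ on a ball, not an annulus, and on that support the ratio $w(t)/\sigma_w(\xi)\approx w(t)/w(1/\abs{\xi})$ is \emph{not} uniformly bounded: for $w(t)=1+\log_+ (1/t)$, fix $\abs{\xi}=1$ (so $w(1/\abs{\xi})=1$) and let $t\to 0$; then the ratio equals $w(t)\to\infty$, and the constraint $\abs{\xi}\lesssim 1/t$ is automatically satisfied. So the decisive uniform multiplier estimate on $\mu_t$, which you yourself identify as the crux, fails.

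This is exactly the reason the paper first decomposes $\phi=\psi_1+\phi_1$, with $\widehat{\phi_1}$ supported in a ball strictly inside the inner radius of $\widehat{\psi}$'s annulus and $\widehat{\psi_1}$ vanishing near the origin, giving $\Pi=\Pi_1+\Pi_2$. For the piece $\Pi_1$ (output localised by $Q_t^{(2)}$ to an annulus), $Q_t F\cdot P_t^{(1)}G$ really does have annular Fourier support, your construction of $R_t$ with kernel $K_t$ is the same as the paper's \eqref{eq:Kernel}, and what you describe --- pairing with $u\in H^1$, moving $R_t^*$ across and invoking $T^\infty_2$--$T^1_2$ duality --- is a legitimate and slightly more streamlined phrasing of what the paper accomplishes via \ref{Carleson:2} of Theorem~\ref{tmh:Carleson_thm} together with Proposition~\ref{prop:monster}. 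But the piece $\Pi_2$ (output localised by $P_t^{(2)}$ to a ball) is missing from your argument entirely: there the product $Q_t F\cdot Q_t^{(1)}G$ has ball-supported Fourier transform and your reduction is unavailable. The paper handles $\Pi_2$ by a different and simpler mechanism: both factors are genuine $Q_t$-type operators, so by \ref{Carleson:1} and Cauchy--Schwarz $\abs{Q_t F\, Q_t^{(1)}G}\,\dd x\,\dd t/t$ is a Carleson measure with norm $\lesssim\norm{F}_{\BMO}\norm{G}_{\BMO}$, and pairing with $P_t^{(2)}H$ and using the nontangential maximal characterisation \eqref{eq:H1} of $H^1$ gives $\Pi_2\colon\BMO\times\BMO\to\BMO$ outright, with no weight involved. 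You need to add the decomposition and this $\Pi_2$ estimate to complete the proof.
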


Before proceeding with the proof, we note that as a consequence of  Theorem \ref{thm:main} we have the following result:

\begin{thm}\label{thm:main_thm} For any admissible weight $w$
\[
	\norm{\Pi (f,g)}_{\BMO_{\sigma_w}} \leq C \norm{f}_{\BMO}\norm{g}_{\mathcal{M}^{1,\frac{1}{w}}\cap \BMO}.
\]
For $w=1$,
\[
	\norm{\Pi (f,g)}_{\BMO} \leq C \norm{f}_{\BMO}\norm{g}_{L^\infty(\R^n)}.
\]
For $w=1+\log_+ \frac{1}{t}$,
\[
	\norm{\Pi (f,g)}_{\BMO_{\sigma_w}} \leq C \norm{f}_{\BMO}\norm{g}_{\bmo}.	
\]
\end{thm}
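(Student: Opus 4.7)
The plan is to deduce all three inequalities directly from Theorem \ref{thm:main} by identifying or embedding the spaces on the right-hand side into $X_w$ and, when necessary, comparing $\BMO_{\sigma_w}$ with $\BMO$.

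For the first inequality I would rely on the computation preceding Theorem \ref{cor:bmo}, where the norm of $X_w$ is rewritten as
\[
\norm{g}_{X_w}\thickapprox \norm{g}_{\BMO}+\sup_{B}\frac{1}{w(r(B))\abs{B}}\abs{\int_{B} g(x)\,\dd x},
\]
the supremum running over all Euclidean balls. By the definition of the Morrey space $\mathcal{M}^{1,1/w}$ this immediately yields $\mathcal{M}^{1,1/w}\cap\BMO\hookrightarrow X_w$ with $\norm{g}_{X_w}\lesssim \norm{g}_{\mathcal{M}^{1,1/w}\cap\BMO}$, and substitution into Theorem \ref{thm:main} gives the first bound. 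The third inequality is even more direct: Theorem \ref{cor:bmo} identifies $X_w$ with $\bmo$ when $w(t)=1+\log_+ 1/t$, so a second invocation of Theorem \ref{thm:main} gives the conclusion.

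The only case that requires a small extra argument is $w\equiv 1$, where one must both control $\norm{g}_{X_1}$ by $\norm{g}_{L^\infty}$ and identify $\BMO_{\sigma_1}$ with $\BMO$. The first is immediate from $\norm{P_t g}_{L^\infty}\leq \norm{\phi}_{L^1}\norm{g}_{L^\infty}$ inserted into \eqref{eq:X_w}. For the second, the normalisation \eqref{eq:normalisation} gives $\sigma_1(\xi)=\big(\int_0^\infty \abs{\widehat{\psi}(t\xi)}^2\frac{\dd t}{t}\big)^{1/2}\thickapprox 1$, and Lemma \ref{lem:regularity_sigma} then shows that both $\sigma_1$ and $1/\sigma_1$ are H\"ormander--Mikhlin multipliers, so $I_{1/\sigma_1}$ is a bicontinuous bijection on $\BMO$ (by Proposition \ref{prop:boundedness_H1_BMO} applied in both directions). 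Therefore $\norm{\Lambda}_{\BMO_{\sigma_1}}=\norm{I_{1/\sigma_1}\Lambda}_{\BMO}\thickapprox \norm{\Lambda}_{\BMO}$, and combining with Theorem \ref{thm:main} finishes the second inequality. No serious obstacle is expected, since the statement is essentially a bookkeeping repackaging of Theorem \ref{thm:main} through the identifications developed in Sections \ref{functionspaces} and \ref{local estimates}; the only delicate point is confirming that the $w=1$ case collapses $\BMO_{\sigma_w}$ back to $\BMO$.
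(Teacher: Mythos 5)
Your proposal is correct and follows essentially the same route as the paper: deduce each case from Theorem \ref{thm:main} via the embeddings $\mathcal{M}^{1,1/w}\cap\BMO\hookrightarrow X_w$, $L^\infty\hookrightarrow X_1$, and the identification $X_w=\bmo$ from Theorem \ref{cor:bmo}. The one place you add detail that the paper leaves implicit is the $w=1$ case, where you correctly verify $\sigma_1\thickapprox 1$ and note that both $\sigma_1$ and $1/\sigma_1$ are H\"ormander--Mikhlin so that $\BMO_{\sigma_1}=\BMO$ with equivalent norms; strictly speaking Proposition \ref{prop:boundedness_H1_BMO} as stated covers only $I_{1/\sigma_w}$, but the boundedness of $I_{\sigma_1}$ on $\BMO$ follows by the same H\"ormander--Mikhlin reasoning since $w\equiv 1$ makes $\sigma_1$ bounded with the right derivative estimates from Lemma \ref{lem:regularity_sigma}, so your argument is sound.
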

\begin{proof}
The strategy consists of applying Theorem \ref{thm:main} to the various weights under consideration.

The first assertion is a direct consequence of the embedding of $\mathcal{M}^{1,\frac{1}{w}}\cap \BMO$ into $X_w$ observed above.

For the second claim, one observes that $w=1$ yields $X_w=L^\infty(\R^n)$ and $\BMO_{\sigma_w}=\BMO$ with equivalent norms in both cases.

For the third claim, Corollary \ref{cor:bmo} implies that $\bmo=X_w$. This concludes the proof of the theorem.
\end{proof}

The proof of Theorem \ref{thm:main} requires a couple of technical results.  The first one is due to Carleson \cite{Carleson} (see also  \cite{G}*{Theorem 7.3.8}). 
\begin{thm}\label{tmh:Carleson_thm}
\begin{enumerate}[itemindent=0pt, label=\bfseries (C\arabic*),fullwidth]
\item \label{Carleson:1}For any $b\in \BMO$, the measure $\dd \mu(x,t)$ given by
\[
	\dd \mu(x,t)=\abs{Q_t b(x)}^2\dd x\frac{\dd t}{t},
\]
is a Carleson measure with norm a constant times $\norm{b}_{\BMO}^2$.

\item \label{Carleson:2} Let $\delta,A>0$. Suppose that $\{K_t\}_{t>0}$ are functions in $\R^n\times \R^n$ that satisfy
\[
	\abs{K_t(x,y)}\leq {A t^{-n}}{\brkt{1+\frac{\abs{x-y}}{t}}^{-n-\delta}},
\]
for all $t>0$ and $x,y\in  \R^n$. Let $R_t$ be the linear operator
\[
	R_tf(x)=\int K_t(x,y) f(y)\dd y.
\]
Suppose that $R_t(1)=0$ for all $t>0$ and that there exists a constant $B>0$ such that
\[
	\int_0^\infty \int \abs{R_t f(x)}^2 \dd x \frac{\dd t}{t}\leq B^2 \norm{f}_{L^2(\R^n)}^2,
\]
for all $f\in L^2(\R^n)$, then for any $b\in \BMO$, the measure $\dd \mu(x,t)$ given by
\[
	\dd \mu(x,t)=\abs{R_t b(x)}^2\dd x\frac{\dd t}{t},
\]
is a Carleson measure with norm a constant times $(A+B)^2\norm{b}_{\BMO}^2$.
\end{enumerate}
\end{thm}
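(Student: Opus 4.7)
The strategy is the standard Calderón--Zygmund decomposition argument for Carleson measures associated to BMO functions. Since part \ref{Carleson:1} is actually the special case of \ref{Carleson:2} with $K_t$ equal to the kernel of $Q_t$ (the $L^2$ square function bound being automatic from Plancherel's theorem and the normalisation \eqref{eq:normalisation}), I will concentrate on \ref{Carleson:2}. The goal is to show that for any ball $B=B(x_0,r)\subset \R^n$
\[
\int_0^r\int_B \abs{R_t b(x)}^2 \dd x\,\frac{\dd t}{t}\lesssim (A+B)^2\norm{b}_{\BMO}^2\abs{B}.
\]

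The first step is to freeze the ball $B$ and decompose $b$ into three pieces adapted to $B$: write $b=b_1+b_2+b_3$, where $b_1=(b-b_{2B})\chi_{2B}$, $b_2=(b-b_{2B})\chi_{(2B)^c}$, and $b_3=b_{2B}$ is the constant average of $b$ over $2B$. Since $R_t(1)=0$ by hypothesis, $R_tb_3\equiv 0$, so only the contributions of $b_1$ and $b_2$ need to be estimated. For the local piece $b_1$, I apply the global $L^2$ square function hypothesis directly:
\[
\int_0^\infty\int_{\R^n} \abs{R_tb_1(x)}^2\dd x\,\frac{\dd t}{t}\leq B^2\norm{b_1}_{L^2}^2\lesssim B^2\norm{b}_{\BMO}^2\abs{B},
\]
where the last inequality is the standard John--Nirenberg bound $\norm{(b-b_{2B})\chi_{2B}}_{L^2}^2\lesssim \norm{b}_{\BMO}^2\abs{B}$.

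The main work is the far piece $b_2$, where I must exploit the pointwise kernel bound together with the logarithmic growth of BMO averages. For $x\in B$ and $0<t<r$, the kernel estimate gives
\[
\abs{R_tb_2(x)}\lesssim A\int_{(2B)^c}\frac{t^\delta}{(t+\abs{x-y})^{n+\delta}}\abs{b(y)-b_{2B}}\dd y,
\]
which I split into dyadic annuli $2^{k+1}B\setminus 2^kB$ for $k\geq 1$. On each annulus $\abs{x-y}\approx 2^kr$, so the kernel contributes a factor $t^\delta(2^kr)^{-n-\delta}$, while the $L^1$ norm of $\abs{b-b_{2B}}$ over $2^{k+1}B$ is bounded by $(\norm{b-b_{2^{k+1}B}}_{L^1(2^{k+1}B)}+\abs{b_{2^{k+1}B}-b_{2B}}\abs{2^{k+1}B})\lesssim (k+1)\norm{b}_{\BMO}\abs{2^{k+1}B}$. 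Summing the resulting geometric--times--$k$ series in $k$ yields
\[
\abs{R_tb_2(x)}\lesssim A\norm{b}_{\BMO}\brkt{\frac{t}{r}}^\delta,
\]
uniformly in $x\in B$. Squaring and integrating in $x\in B$ and $0<t<r$, the extra factor $(t/r)^{2\delta}$ is integrable against $\dd t/t$ and produces $\abs{B}$ after the $x$-integration, giving the required Carleson bound for the far piece with constant $A^2\norm{b}_{\BMO}^2$.

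Finally, combining the three estimates yields the Carleson norm bound $\lesssim (A+B)^2\norm{b}_{\BMO}^2$. The assertion \ref{Carleson:1} then follows by specialising $K_t$ to the kernel of $Q_t$: the size estimate is immediate from $\widehat{\psi}\in\mathcal{C}^\infty_c$ with $\delta$ as large as one pleases, the cancellation $Q_t(1)=0$ is automatic since $0\notin\supp\widehat\psi$, and the $L^2$ square function bound is a direct consequence of Plancherel and \eqref{eq:normalisation}. The main obstacle, as usual in such arguments, is the far piece: the precise interplay between the polynomial decay rate $\delta$ of the kernel and the linear growth in $k$ of $\abs{b_{2^{k+1}B}-b_{2B}}$ must be handled carefully so that the dyadic sum converges and produces the factor $(t/r)^\delta$ needed for the $\dd t/t$ integration to be finite.
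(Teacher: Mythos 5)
The paper does not actually prove this theorem: it is quoted as a classical result of Carleson, with the reference Grafakos, \emph{Modern Fourier Analysis}, Theorem 7.3.8, so there is no internal proof to compare against. Your argument is correct and is essentially the standard proof found in that reference --- the decomposition $b=(b-b_{2B})\chi_{2B}+(b-b_{2B})\chi_{(2B)^c}+b_{2B}$, the use of $R_t(1)=0$ to annihilate the constant, the $L^2$ square-function hypothesis combined with John--Nirenberg for the local piece, and the kernel decay played against the growth $\abs{b_{2^{k+1}B}-b_{2B}}\lesssim (k+1)\norm{b}_{\BMO}$ to produce the factor $(t/r)^{\delta}$ for the far piece (with the resulting constant depending on $\delta$ and $n$, as it should); your reduction of \ref{Carleson:1} to \ref{Carleson:2} is also fine, noting that one only needs uniform boundedness of $\int_0^\infty\abs{\widehat{\psi}(t\xi)}^2\,\frac{\dd t}{t}$ rather than the exact normalisation \eqref{eq:normalisation}.
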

 
We will also need the following proposition, whose proof can be found in \cite{RRS2}*{Proposition 4.11}.

\begin{prop}\label{prop:monster}
Let $F\in H^1$, $v\in L^\infty_{t,x}$, and $T_{t}$ be the convolution operator given by
\[
	T_{t} (f)(x)=\int f(x-y)\dd \nu_t(y),
\]
with $\{\lambda_t\}_t$  being finite measures such that for some $\delta>0$ and for any $t>0$,
\begin{equation*} %
	 \int \brkt{1+\frac{\abs{x-y}}{t}}^{-n-\delta} \dd \abs{\lambda_t}(y)\lesssim \brkt{1+\frac{\abs{x}}{t}}^{-n-\delta}.
\end{equation*}
Let $G(t,x)$ be a measurable function on $\R^{n+1}_+$ such that $$\dd \mu_G(t,x)=\abs{G(t,x)}^2\frac{\dd t}{t}\dd x$$ is a Carleson measure with Carleson norm $\norm{\dd \mu_G}_{\mathcal{C}}$. Then
\[
	\abs{\int\int_0^\infty Q_t T_t F(x)\, G(t,x) v(t,x)\frac{\dd t}{t}\dd x}\leq C \norm{F}_{H^1}\norm{\dd \mu_G}_{\mathcal{C}}^{\frac{1}{2}}\norm{v}_{L^\infty_{t,x}}.
\]
\end{prop}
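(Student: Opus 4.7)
The plan is to interpret the integral in Proposition \ref{prop:monster} as a tent-space pairing and then apply the Coifman--Meyer--Stein duality between the tent spaces $T^{1}_{2}$ and $T^{\infty}_{2}$. Setting $h(t,x):=Q_{t}T_{t}F(x)$ and $g(t,x):=G(t,x)v(t,x)$, the integral becomes
\[
I=\int_{\R^n}\int_{0}^{\infty}h(t,x)g(t,x)\,\frac{\dd t\,\dd x}{t},
\]
and tent-space duality yields $|I|\lesssim \|h\|_{T^{1}_{2}}\|g\|_{T^{\infty}_{2}}$, where $\|h\|_{T^{1}_{2}}=\|A(h)\|_{L^{1}}$ is the $L^{1}$-norm of the conical square function and $\|g\|_{T^{\infty}_{2}}$ equals the square root of the Carleson norm of $|g|^{2}\frac{\dd t\,\dd x}{t}$.

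The $T^{\infty}_{2}$-side is immediate: since $|g|^{2}\leq\|v\|_{L^{\infty}_{t,x}}^{2}|G|^{2}$, the measure $|g|^{2}\frac{\dd t\,\dd x}{t}$ is Carleson with norm at most $\|v\|_{L^{\infty}_{t,x}}^{2}\|\dd\mu_{G}\|_{\mathcal{C}}$, giving $\|g\|_{T^{\infty}_{2}}\leq\|v\|_{L^{\infty}_{t,x}}\|\dd\mu_{G}\|_{\mathcal{C}}^{1/2}$. The substantive content is then $\|A(h)\|_{L^{1}}\lesssim \|F\|_{H^{1}}$. Here $Q_{t}T_{t}$ is convolution with the kernel $\eta_{t}:=\psi_{t}*\nu_{t}$, and one records two structural properties of $\eta_{t}$: (i) the pointwise decay $|\eta_{t}(x)|\lesssim t^{-n}(1+|x|/t)^{-n-\delta}$, which follows by inserting the Schwartz decay of $\psi_{t}$ into the convolution and using the hypothesised decay of $\nu_{t}$; and (ii) the cancellation $\int\eta_{t}(x)\,\dd x=0$, since $\widehat{\psi}$ vanishes near the origin implies $\int\psi=0$. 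Differentiation under the integral transfers the Schwartz regularity of $\psi_{t}$ to give analogous decay estimates for $\nabla\eta_{t}$.

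The main obstacle is converting these pointwise/Fourier properties of $\eta_{t}$ into the area-function bound $\|A_{\eta}F\|_{L^{1}}\lesssim \|F\|_{H^{1}}$. I would proceed by atomic decomposition: for an $H^{1}$-atom $a$ supported on a ball $B=B(x_{0},r)$, split $\int A_{\eta}a(x)\,\dd x$ according to whether $x\in 2B$ or $x\notin 2B$. The local piece is handled by Cauchy--Schwarz together with an $L^{2}\to L^{2}$ bound for $A_{\eta}$; this last bound itself is a Plancherel computation once one verifies the Calder\'on-type condition $\int_{0}^{\infty}|\widehat{\eta_{t}}(\xi)|^{2}\frac{\dd t}{t}\lesssim 1$, which is forced by (i)--(ii) (the annular localisation of $\widehat{\psi}(t\cdot)$ confines the relevant integration to $t\approx 1/|\xi|$). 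The far piece $\int_{(2B)^{c}}A_{\eta}a\,\dd x$ is controlled by using the mean-zero of $a$ together with the regularity of $\eta_{t}$ to extract an extra Taylor-cancellation factor $|y-x_{0}|$, yielding pointwise decay of $\eta_{t}*a$ rapid enough that $A_{\eta}a\in L^{1}((2B)^{c})$ uniformly over atoms. Summing the atomic decomposition with $\sum_{j}|\lambda_{j}|\approx \|F\|_{H^{1}}$ and combining with the $T^{\infty}_{2}$-bound for $g$ completes the proof.
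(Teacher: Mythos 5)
Your proposal is sound: tent-space duality $T^1_2\times T^\infty_2$ (equivalently, Carleson-measure duality) reduces everything to $\norm{A(\eta_t\ast F)}_{L^1}\lesssim\norm{F}_{H^1}$ for $\eta_t=\psi_t\ast\nu_t$, and your verification of the size/regularity/cancellation of $\eta_t$ (note the hypothesis also gives $\sup_t\abs{\nu_t}(\R^n)\lesssim 1$ by integrating the decay condition in $x$, which is what makes the Calder\'on condition uniform) together with the atomic estimate is the standard and correct way to do this. The paper itself contains no proof of this proposition — it is quoted from \cite{RRS2}*{Proposition 4.11} — so there is nothing internal to compare against; your argument is the natural one and, in spirit, the expected proof of the cited result (the only blemishes are cosmetic: the statement's $\nu_t$/$\lambda_t$ are the same measures, and the Taylor factor you extract in the far-field estimate is the atom-side distance $\abs{z-x_0}\leq r$, not $\abs{y-x_0}$).
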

\subsection*{\bf Proof of Theorem \ref{thm:main}}

Let $w$ be an admissible weight.  Suppose that $\widehat{\psi}$ is supported in $0<\alpha\leq \abs{\xi}\leq \beta$ and $\widehat{\psi}$ has its support in $\abs{\xi}\leq \beta$.  Then we can decompose $\phi=\psi_1+\phi_1$ where $\widehat{\psi_1}$ vanishes in $\abs{\xi}\leq \alpha/8$ and $\widehat{\phi_1}$ is supported in $\abs{\xi}\leq \alpha/4$. Then one can  find a smooth real valued radial function $\psi_2$ with spectrum included in an annulus and equal to one in $\alpha/4\leq \abs{\xi}\leq 2\beta$, and another smooth function $\phi_2$ with $\widehat{\phi_2}$ compactly supported and equal to one in $\abs{\xi}\leq 2\beta$ such that we can decompose $\Pi(F,G)$ as
\[
	\Pi(F,G)=\Pi_1(F,G)+\Pi_2(F,G)
\]
where
 \[
 	\Pi_1(F,G)=\int_0^\infty Q_t^{(2)}\brkt{\brkt{Q_t F} \brkt{P_t^{(1)} G}}  m(t)\frac{\dd t}{t},
 \]
 and
  \[
 	\Pi_2(F,G)= \int_0^\infty P_t^{(2)} \brkt{\brkt{Q_t F} \brkt{Q_t^{(1)} G}} m(t)\frac{\dd t}{t},
 \]
 and $P_t^{(j)}$ and $Q_t^{(j)}$ are associated to $\widehat{\phi_j}$ and $\widehat{\psi_j}$ respectively.

 \subsection*{Estimates for $\Pi_2$}
 We shall prove that $\Pi_2: \BMO\times \BMO \to \BMO$. The Cauchy-Schwarz inequality and \ref{Carleson:1} in Theorem \ref{tmh:Carleson_thm} imply that 
 $\abs{Q_t F Q_t^{(1)}G }\dd x\frac{\dd t}{t}$ defines a Carleson measure with Carleson norm bounded by a constant multiple of $\norm{F}_\BMO\norm{G}_\BMO$.

For any $H\in H^1$ we can write
\begin{equation}\label{eq:first_term}
	\left<\Pi_2(F,G),H\right>=\int \int_0^\infty {\brkt{Q_t F} \brkt{Q_t^{(1)} G}} \big(P_t^{(2)} H\big)\, m(t)\frac{\dd t}{t}\dd x,
\end{equation}
where we have implicitly used that $P_t^{(2)}(m(t)H)=\big(P_t^{(2)} H\big)\, m(t).$

Using Fefferman-Stein's result \cite{FS} and \eqref{eq:H1} we obtain
\[
	\begin{split}
	\abs{\left<\Pi_2(F,G),H\right>} &\lesssim \norm{F}_{\BMO}\norm{G}_{\BMO} \norm{m}_{L^\infty} \int \sup_{t>0}\sup_{\abs{x-y}<t} \abs{P_t^{(2)} H(y)}\dd x\\
&\lesssim \norm{m}_{L^\infty} \norm{H}_{H^1}  \norm{F}_{\BMO}\norm{G}_{\BMO}.
	\end{split}
\]
   Therefore, we obtain that for any $F,G\in \BMO$ and $H\in H^1(\R^n)$
 \begin{equation}\label{eq:pi_6}
 	\abs{\left<\Pi_2(F,G),H\right>}\lesssim \norm{m}_{L^\infty} \norm{F}_{\BMO}\norm{G}_{\BMO}\norm{H}_{H^1}.
 \end{equation}
\subsection*{Estimates for $\Pi_1$}

For the admissible weight $w$, if we define $v(t,x)=P_t^{(2)}(G)(x)m(t)/w(t)$, by \eqref{eq:X_w} we have
 that
 \begin{equation}\label{eq:v}
 	\norm{v(t,\cdot)}_{L^\infty}\lesssim \norm{m}_{L^\infty}\norm{G}_{X_w}
\end{equation}
and
 \[
 	\left<\Pi_1(F,G),H\right>=\int\int_0^\infty \brkt{Q_t F} \big({Q_t^{(2)}H }\big)w(t) v(t,x)\frac{\dd t}{t}\dd x,
 \]
where once again we have used the fact that $Q_t^{(2)}(m(t)H)=\big(Q_t^{(2)} H\big)\, m(t).$

Thus, taking $H=I_{{1}/{\sigma_w}} h$ with $h\in H^1(\R^n)$ and $\widehat{\psi_3}$ a smooth annulus supported function such that it is equal to one on the support of $\widehat{\psi}_2$,  one can write 
\[
	w(t)Q_t^{(2)}H=R_t(Q_t^{(3)} h),
\]
where 
\[
	R_tf(x)=\int K_t(x-y) f(y)\dd y,
\]
with
\begin{equation*}
		K_t(z)= w(t)\int \frac{\widehat{\psi}_2(t\xi)}{\sigma_w(\xi)} e^{iz\cdot\xi}\ddd \xi.
\end{equation*}
We claim that for any $N\geq 1$
\begin{equation} \label{eq:Kernel}
	\abs{K_t(z)}\leq A{t^{-n}}{\brkt{1+\frac{\abs{z}}{t}}^{-2N}}.
\end{equation}
Indeed, integration by parts yields that for any $N\geq 1$
\[
	\begin{split}
	K_t(z)&=t^{-n} w(t)\int \frac{\widehat{\psi}_2(\xi)}{\sigma_w(\xi/t)} e^{i t^{-1}z\cdot\xi}\ddd \xi\\
	&=t^{-n} w(t) \brkt{\frac{\abs{z}}{t}}^{-2N}\int (-\Delta)^{N}\left[\frac{\widehat{\psi}_2(\xi)}{\sigma_w(\xi/t)}\right]e^{i t^{-1}z\cdot\xi}\ddd \xi,
	\end{split}
\]
which implies the estimate above since the Leibniz rule, the fact that $\abs{\xi}\approx 1$ and  \ref{eq:sima_inv} in Lemma \ref{lem:regularity_sigma} yield that
\[
	\abs{(-\Delta)^{N}\left[\frac{\widehat{\psi}_2(\xi)}{\sigma_w(\xi/t)}\right]}\lesssim w(t)^ {-1}.
\]

 Thus we can write
 \begin{equation}\label{eq:estimate_pi4}
 	\begin{split}
 		\left<\Pi_1(F,G),H\right>&=\int\int_0^\infty  \brkt{Q_t F} \brkt{R_t {Q_t^{(3)}} h} v(t,x)\frac{\dd t}{t}\dd x\\
 		&=\int\int_0^\infty  \brkt{{Q_t^{(3)}} h} R_t^*\left[{\brkt{Q_t F} v(t,\cdot)}\right]\frac{\dd t}{t}\dd x.
 		\end{split}
 \end{equation}
 Now observe that
 \[
 	R_t^*\brkt{\brkt{Q_t F} v(t,\cdot)}(x)=\int J_t(x,z) F(z)\dd z,
 \]
 where
 \[
 	J_t(x,z)=\int K_t(x-y)v(t,y) \psi_t(y-z)\dd y.
 \]
 Since $\psi$ is a Schwartz function, \eqref{eq:v} holds and $K_t$ satisfies \eqref{eq:Kernel}, it can be shown that for any $N\geq 1$
 \[
 	\abs{J_t(x,z)}\lesssim  t^{-n}\brkt{1+\frac{\abs{x-z}}{t}}^{-2N}\norm{G}_{X_w}\norm{m}_{L^\infty}.
 \]
 On the other hand, for $F=1$, $R_t^*\brkt{\brkt{Q_t F} v(t,\cdot)}(x)=0$. Moreover, Plancherel's theorem, Corollary \ref{cor:equivalence}, and the Minkowski and Young inequalities yield
 \[
 	\begin{split}
 	\int\int_0^\infty &\abs{R_t^*\brkt{\brkt{Q_t F} v(t,\cdot)}(x)}^2 \frac{\dd t}{t}\dd x\thickapprox
 	\int\int_0^\infty \abs{Q_t^{(2)}\brkt{(Q_t F) v(t,\cdot)}(x)}^2 \frac{\dd t}{t}\dd x\\
 	&\leq \int\int_0^\infty \norm{\psi_{2,t}}_{L^1}^2\norm{v(t,\cdot)}_{L^\infty}^2 \abs{Q_t F(x)}^2
 	 \frac{\dd t}{t}\dd x\lesssim \norm{F}_{L^2}^2 \norm{G}_{X_w}^2\norm{m}_{L^\infty}^2.
 		\end{split}
 \]
Finally \ref{Carleson:2} in Theorem \ref{tmh:Carleson_thm} yields that for any $F\in \BMO$
 \[
 	\abs{R_t^*\brkt{\brkt{Q_t F} v(t,\cdot)}(x)}^2 \frac{\dd t}{t}\dd x
 \]
 defines a Carleson measure with Carleson norm bounded by a constant multiple of $$\norm{G}_{X_w}^2\norm{m}_{L^\infty}^2\norm{F}_{\BMO}^2.$$

Now applying Proposition \ref{prop:monster} to the last term in \eqref{eq:estimate_pi4}, with
 $G(t,x)=R_t^*\left[{\brkt{Q_t F} v(t,\cdot)}\right]$, $T_t f=f$ (that is, taking $\lambda_t$ to be the Dirac measure supported at the origin), and $v(t,x)=1$ yield
 \begin{equation}\label{eq:pi_4}
 	\abs{\left<\Pi_1(F,G),I_{\frac{1}{\sigma_w}}h\right>}\lesssim \norm{h}_{H^1}\norm{F}_{\BMO}\norm{G}_{X_w}\norm{m}_{L^\infty}.
 	 \end{equation}
To finish the proof of Theorem \ref{thm:main}, we put \eqref{eq:pi_6} and \eqref{eq:pi_4} together, and use  Proposition \ref{prop:boundedness_H1_BMO} which yields the $H^1$ boundedness of $I_{\frac{1}{\sigma_w}}$. Hence
 \[
 	\begin{split}
 	\Big|\left<\Pi(F,G), I_{\frac{1}{\sigma_w}}h\right>\Big|&\lesssim \norm{F}_{\BMO}\norm{G}_{X_w}\norm{h}_{H^1},
 	\end{split}
 \]
 which implies
 \[
 	\norm{ \Pi(F,G)}_{\BMO_{\sigma_w}}\lesssim \norm{F}_{\BMO}\norm{G}_{X_w},
 \]
 for any $F\in \BMO$, $G\in X_w$. This concludes the proof of Theorem \ref{thm:main}.

 \subsection{Boundedness of variable coefficient paraproducts}\label{xdependent}
Here we shall investigate the boundedness of certain variable coefficient paraproducts defined by
 $$\PP(f,g)(x):= \int_0^1 Q_t f(x)\, P_tg(x) m(t,x)\frac{\dd t}{t}.$$
 These paraproducts are of particular interest when dealing with bilinear pseudodifferential and Fourier integral operators (see Section \ref{FIOs}). Our main result concerning $\PP(f,g)$ is as follows:
\begin{thm}\label{xdependent version}
 Let $\PP$ be the operator given as above, where $m$ is a bounded measurable function such that
\[
	 \sup_t \norm{m(t,\cdot)}_{L^\infty}+\sup_t \norm{\nabla_x m(t,\cdot)}_{L^\infty}<+\infty.
\]
Let $w$ be an admissible weight such that $\int_0^1 w(t)\dd t<+\infty$. Then
\[
	\norm{\PP(f,g)}_{\BMO_{\sigma_w}} \leq C \norm{f}_{\bmo}\norm{g}_{X_w}.
\]
\end{thm}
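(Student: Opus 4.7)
The plan is to imitate the proof of Theorem \ref{thm:main} while handling the $x$-dependence of $m$ by a commutator decomposition in which the principal part is treated by the constant-coefficient machinery and the commutator error is absorbed thanks to $\int_0^1 w(t)\,\dd t<\infty$. By the duality $(H^1_{1/\sigma_w})^\ast=\BMO_{\sigma_w}$ it suffices to establish
\[
	\bigl|\esc{\PP(f,g),\, I_{1/\sigma_w}h}\bigr|\lesssim \norm{f}_{\bmo}\norm{g}_{X_w}\norm{h}_{H^1}\qquad\text{for every }h\in H^1(\R^n).
\]

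As in the proof of Theorem \ref{thm:main}, I decompose $\phi=\psi_1+\phi_1$ and write $P_t=Q_t^{(1)}+P_t^{(1)}$, splitting $\PP(f,g)=\PP_1(f,g)+\PP_2(f,g)$ where $\PP_1$ contains $P_t^{(1)}g$ and $\PP_2$ contains $Q_t^{(1)}g$. By the same spectral support considerations one can insert the outer localisations
\[
	\PP_1(f,g)=\int_0^1 Q_t^{(2)}\bigl((Q_tf)(P_t^{(1)}g)\bigr)\,m(t,\cdot)\,\frac{\dd t}{t},\qquad \PP_2(f,g)=\int_0^1 P_t^{(2)}\bigl((Q_tf)(Q_t^{(1)}g)\bigr)\,m(t,\cdot)\,\frac{\dd t}{t}.
\]
Pairing against $H=I_{1/\sigma_w}h$ and moving the outer $Q_t^{(2)}$ (resp.\ $P_t^{(2)}$) by self-adjointness, the identities $Q_t^{(2)}(mH)=m\,Q_t^{(2)}H+[Q_t^{(2)},m]H$ and $P_t^{(2)}(mH)=m\,P_t^{(2)}H+[P_t^{(2)},m]H$ split $\esc{\PP_j(f,g),H}=A_j+B_j$ into a principal part $A_j$ and a commutator part $B_j$.

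The principal parts are treated essentially as in the proof of Theorem \ref{thm:main}. For $A_2$, $|Q_tf\cdot Q_t^{(1)}g|\,\dd x\,\dd t/t$ is a Carleson measure of norm $\lesssim\norm{f}_\BMO\norm{g}_\BMO$ by \ref{Carleson:1} and Cauchy--Schwarz; pairing with $m(t,x)P_t^{(2)}H(x)$ and using \eqref{eq:H1} together with the $H^1$-boundedness of $I_{1/\sigma_w}$ (Proposition \ref{prop:boundedness_H1_BMO}) gives $|A_2|\lesssim\norm{m}_\infty\norm{f}_\bmo\norm{g}_{X_w}\norm{h}_{H^1}$. For $A_1$, setting $v(t,x)=P_t^{(1)}g(x)m(t,x)/w(t)$ (so that $\norm{v}_{L^\infty_{t,x}}\lesssim\norm{m}_\infty\norm{g}_{X_w}$), writing $w(t)Q_t^{(2)}H=R_tQ_t^{(3)}h$ exactly as in the constant-coefficient argument, dualising $R_t$, and verifying through \ref{Carleson:2} of Theorem \ref{tmh:Carleson_thm} that the measure $|R_t^\ast((Q_tf)v(t,\cdot))|^2\,\dd x\,\dd t/t$ is Carleson with norm $\lesssim\norm{v}_{L^\infty_{t,x}}^2\norm{f}_\BMO^2$ (the kernel and $L^2$-hypotheses being insensitive to whether $v$ is constant or merely bounded in $x$), an application of Proposition \ref{prop:monster} with $T_t=\mathrm{Id}$ yields $|A_1|\lesssim\norm{m}_\infty\norm{f}_\bmo\norm{g}_{X_w}\norm{h}_{H^1}$.

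The genuinely new point is the control of the commutators. For any Schwartz bump $\Theta$,
\[
	[\Theta(tD),m(t,\cdot)]H(x)=\int (\Theta^\vee)_t(x-y)\bigl(m(t,y)-m(t,x)\bigr)H(y)\,\dd y,
\]
and since $|m(t,y)-m(t,x)|\le\norm{\nabla_x m}_\infty|x-y|$ and $\int|z|\,|(\Theta^\vee)_t(z)|\,\dd z\lesssim t$, Minkowski's inequality gives
\[
	\norm{[\Theta(tD),m(t,\cdot)]H}_{L^1}\lesssim t\,\norm{\nabla_x m}_\infty\norm{H}_{L^1}\lesssim t\,\norm{\nabla_x m}_\infty\norm{h}_{H^1}.
\]
Taking $\Theta=\psi_2$ in $B_1$, and using $\norm{Q_tf}_\infty\lesssim\norm{f}_\bmo$ (by \eqref{eq:Q1}) together with $\norm{P_t^{(1)}g}_\infty\lesssim w(t)\norm{g}_{X_w}$ (by \eqref{eq:X_w} and the $\phi$-independence of $X_w$), the commutator bound yields
\[
	|B_1|\lesssim\norm{f}_\bmo\norm{g}_{X_w}\norm{\nabla_x m}_\infty\norm{h}_{H^1}\int_0^1 w(t)\,\dd t,
\]
which is finite by hypothesis. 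The term $B_2$ is handled analogously with $\Theta=\phi_2$ and the uniform bound $\norm{Q_t^{(1)}g}_\infty\lesssim\norm{g}_\bmo\le\norm{g}_{X_w}$, giving $|B_2|\lesssim\norm{f}_\bmo\norm{g}_{X_w}\norm{\nabla_x m}_\infty\norm{h}_{H^1}$. Summing the four contributions closes the estimate. The main obstacle is the commutator bookkeeping that couples the constant-coefficient machinery to the Lipschitz regularity of $m$; the hypothesis $\int_0^1 w(t)\,\dd t<\infty$ is exactly what is needed to render the commutator error integrable in $t$.
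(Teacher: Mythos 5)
Your proposal is correct and follows essentially the same route as the paper's own proof: the same $\PP_1+\PP_2$ decomposition via $P_t=P_t^{(1)}+Q_t^{(1)}$, the same commutator splitting $\Theta(tD)(mH)=m\,\Theta(tD)H+[\Theta(tD),m]H$, the principal parts handled by the Carleson measure and $R_t$-machinery of Theorem \ref{thm:main}, and the commutator parts controlled by the Lipschitz bound on $m$ producing a factor $t$ in the $L^1\to L^1$ operator norm, with $\int_0^1 w(t)\,\dd t<\infty$ absorbing the weight. The only cosmetic difference is that the paper first proves the bound with $\norm{f}_\BMO$ and then notes $\PP(f,g)=\PP((1-\widehat\Theta)(D)f,g)$ to pass to $\norm{f}_\bmo$, whereas you invoke $\norm{f}_\BMO\le\norm{f}_\bmo$ directly, which is equivalent.
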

 \begin{proof} We shall start by observing that, since $t\in (0,1)$, there exist a Schwartz function $\Theta$ such that $\widehat{\Theta}$ is supported in a small neighbourhood of the origin such that for any $F$,
 \[
 	Q_t\brkt{F}=Q_t\brkt{(1-\widehat{\Theta})(D)F}.
 \]
 In this way, we observe that
 \begin{equation}\label{eq:obs}
 	\PP(F,G)(x)= \PP((1-\widehat{\Theta})(D)F, G).
\end{equation}

For proving the result, one proceeds as in the proof of Theorem \ref{thm:main_thm} and decomposes $\PP(F,G)=\PP_1(F,G)+\PP_2(F,G)$, and pair them with a function $H=I_{1/\sigma_w} h\in H^1_{1/\sigma_w}(\R^n)$.
 Then
 \[
	\left<\PP_2(F,G),H\right>=\int \int_0^1{\brkt{Q_t F} \brkt{Q_t^{(1)} G}} \big(P_t^{(2)} (M_m H)\big)\frac{\dd t}{t}\dd x,
\]
where $M_m(H)(x)= m(t,x) H(x)$. Now if we introduce the operator
\[
	[P_t^{(2)},M_m] H(x)=P_t^{(2)} (M_m H)(x)-m(t,x)P_t^{(2)} (H)(x),
\]
we can write
 \[
 	\begin{split}
	\left<\PP_2(F,G),H\right> &=\int \int_0^1 {\brkt{Q_t F} \brkt{Q_t^{(1)} G}} [P_t^{(2)},M_m] H(x)\frac{\dd t}{t}\dd x+\\
	&+\int \int_0^1 {\brkt{Q_t F} \brkt{Q_t^{(1)} G}}\big(P_t^{(2)} H\big) m(t,\cdot)\frac{\dd t}{t}\dd x = I+II.
	\end{split}
\]
The second term can be dealt with in the same way as \eqref{eq:first_term}, which yields
\[
	\abs{II}\lesssim \sup_t \norm{m(t,\cdot)}_{L^\infty} \int \sup_{0<t<1}\sup_{\abs{x-y}<t} \abs{P_t^{(2)} H(y)}\dd x\lesssim \sup_t \norm{m(t,\cdot)}_{L^\infty} \norm{H}_{H^1}.
\]
For the first term we observe that $[P_t^{(2)},M_m] H(x)(x) =\int K(t,x,y) H(y)\dd y$, with
 \[
 	K(t,x,y)=t^{-n}\phi_2\brkt{\frac{x-y}{t}}\big(m(t,y) - m(t,x)\big)
\]
Observe that by the mean-value theorem, we have that
\[
|m(t,y) - m(t,x)| \lesssim\sup_t \norm{\nabla_x m(t,\cdot)}_{L^\infty}|y-x|
\]
with an implicit constant independent of $t$. Hence, for any $N\geq n+1$
\[
	\abs{K(t,x,y)}\lesssim t \frac{t^{-n}}{(1+\abs{x-y}/t)^N},
\]
which yields that  $[P_t^{(2)},M_m]$ is a bounded operator in $L^1(\R^n)$ with norm bounded by $t$.
This fact and \eqref{eq:Q1} imply that
\[
	\abs{I}\leq \norm{F}_{\BMO}\norm{G}_{\BMO} \int_0^1 \int\abs{[P_t^{(2)},M_m] H(x)}\dd x \frac{\dd t}{t}\leq
	\norm{F}_{\BMO}\norm{G}_{\BMO}\norm{H}_{L^1}.
\]
Then
\begin{equation}\label{eq:step_1}
	\abs{\left<\PP_2(F,G),H\right>}\lesssim \norm{F}_{\BMO}\norm{G}_{X_w}\norm{H}_{H^1}.
\end{equation}
A similar analysis for $\PP_1$ yields
 \[
 	\begin{split}
	\left<\PP_1(F,G),H\right> &=\int \int_0^1 {\brkt{Q_t F} \brkt{P_t^{(1)} G}} [Q_t^{(2)},M_m] H\frac{\dd t}{t}\dd x+\\
	&+\int \int_0^1 {\brkt{Q_t F} \brkt{P_t^{(1)} G}}\big(Q_t^{(2)} H\big) m(t,\cdot)\frac{\dd t}{t}\dd x = A+B.
	\end{split}
\]
Taking $H=I_{1/\sigma_w} h\in H^{1}_{1/\sigma_w}$, proceeding as in the study of \eqref{eq:estimate_pi4} yields
\[
	\abs{B}\lesssim \norm{F}_{\BMO}\norm{G}_{X_w}\norm{h}_{H^1}.
\]
The first term can be written as
\[
	A=\int \int_0^1 \brkt{Q_t F} w(t) [Q_t^{(2)},M_m] H v(t,x)\frac{\dd t}{t}\dd x
\]
where $v(t,x)=\brkt{P_t^{(1)} G}(x)/w(t)$. Observe that $\sup_t \norm{v(t,\cdot)}_{L^\infty}\lesssim \norm{G}_{X_w}$ and also, with a similar argument as above,  $[Q_t^{(2)},M_m]$ is bounded in $L^1$ with norm bounded by a constant multiple of $t$. Then, \eqref{eq:Q1} and the hypothesis that $\int_0^1 w(t)\dd t$ is finite yield
\[
	\abs{A}\lesssim \norm{F}_{\BMO} \norm{G}_{X_w}\norm{H}_{L^1}.
\]
Therefore
\begin{equation}\label{eq:step_2}
		\left<\PP_1(F,G),I_{1/\sigma_w}h\right> \lesssim \norm{F}_{\BMO} \norm{G}_{X_w}\norm{h}_{H^1}
\end{equation}
 Hence, putting together \eqref{eq:step_1} and \eqref{eq:step_2} it follows that
 \[
 	\norm{\PP(F,G)}_{\BMO_{\sigma_w}} \lesssim \norm{F}_{\BMO} \norm{G}_{X_w}.
 \]
 Taking \eqref{eq:obs} into account and using \eqref{eq:bmo}, we have
 \[
 	\norm{\PP(F,G)}_{\BMO_{\sigma_w}} \lesssim \norm{(1-\widehat{\Theta})(D)F}_{\BMO} \norm{G}_{X_w}\lesssim \norm{F}_{\bmo}\norm{G}_{X_w}.
 \]
 \end{proof}

\begin{cor}\label{cor:bmo_bmo} Under the same conditions as in the previous theorem with $w(t)=1+\log_+ 1/t$ one has
\[
	\norm{\PP (f,g)}_{\BMO_{\sigma_w}} \leq C \norm{f}_{\bmo}\norm{g}_{\bmo}.
\]
\end{cor}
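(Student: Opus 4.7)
The plan is to derive this corollary as a direct specialization of Theorem \ref{xdependent version}, using the characterization of $\bmo$ provided by Theorem \ref{cor:bmo}. The whole argument reduces to verifying that the hypotheses of Theorem \ref{xdependent version} are satisfied by the specific weight $w(t) = 1 + \log_+ 1/t$, and then identifying $X_w$ with $\bmo$ in the resulting inequality.

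First I would note that $w(t) = 1 + \log_+ 1/t$ was already verified to be an admissible weight in the list of examples following the definition of admissibility. Next I would check the integrability hypothesis $\int_0^1 w(t)\,\dd t < +\infty$ required by Theorem \ref{xdependent version}. A direct computation gives
\[
\int_0^1 \brkt{1 + \log_+\tfrac{1}{t}}\,\dd t = 1 + \int_0^1 \log\tfrac{1}{t}\,\dd t = 2,
\]
so this hypothesis is satisfied.

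With both hypotheses in hand, Theorem \ref{xdependent version} yields
\[
\norm{\PP(f,g)}_{\BMO_{\sigma_w}} \lesssim \norm{f}_{\bmo}\,\norm{g}_{X_w}.
\]
Finally, Theorem \ref{cor:bmo} states that for precisely this weight $w(t) = 1 + \log_+ 1/t$, the space $X_w$ coincides with $\bmo$ with equivalent norms, so $\norm{g}_{X_w} \thickapprox \norm{g}_{\bmo}$. Substituting this equivalence into the previous display gives the claimed estimate.

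Since each of the ingredients (admissibility of $w$, finiteness of $\int_0^1 w(t)\,\dd t$, Theorem \ref{xdependent version}, and Theorem \ref{cor:bmo}) is already established, there is no genuine obstacle here; the corollary is essentially a bookkeeping exercise combining these prior results.
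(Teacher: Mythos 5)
Your proof is correct and follows the same route the paper implicitly takes: the corollary is stated without proof precisely because it is the direct specialization of Theorem \ref{xdependent version} to $w(t)=1+\log_+ 1/t$, combined with the identification $X_w=\bmo$ from Theorem \ref{cor:bmo}. Your verification that $\int_0^1 w(t)\,\dd t=2<\infty$ and that this $w$ is admissible fills in the bookkeeping exactly as intended.
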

  \section{Application to one dimensional bilinear  Fourier integral operators }\label{FIOs}

	The investigations concerning bilinear Fourier integral operators started in \cite{GP}, followed by \cite{RS}. In \cite{RRS} the authors establish the regularity of bilinear Fourier integral operators with bilinear amplitudes in $S^m_{1,0} (n,2)$ and non-degenerate phase functions on $L^p$ spaces. However it was also shown that many endpoint results fail in dimension one. The following discussion i.e. Theorem \ref{thm:1d_FIO}, can be viewed as one positive result in that direction.

   \begin{defn}\label{defn of hormander symbols}
Let $N\in \N$. A function $\sigma \in \mathcal{C}^{\infty}(\R^n \times\R^{Nn})$ (called the amplitude in this context), belongs to the class $S^{0}_{1,0}(n,N)$, if for all multi-indices $\alpha$ and $\beta$, there exist constants $C_{\alpha,\beta}$ such that
   \begin{align*}
      |\partial_{\Xi}^{\alpha}\partial_{x}^{\beta}\sigma(x,\Xi)| \leq C_{\alpha,\beta} \brkt{1+\abs{\Xi}}^{m-\vert \alpha\vert}, \qquad \text{for all $(x,\Xi)\in \R^n\times \R^{Nn}$}.
   \end{align*}
\end{defn}

\begin{defn}\label{defn:phase} A function $\phase(x,\xi)\in \mathcal{C}^\infty\brkt{\R^n\times (\R^n\setminus \{0\})}$ is called a non-degenerate phase function if it is real-valued, positively homogeneous of degree one in $\xi$ and verifies
\[
	\det {\frac{\d^2 \phase}{\d x \d\xi}}\neq 0,
\]
for $\xi\neq 0$ and $x$ in the spatial support of the amplitude.
\end{defn}
\begin{defn}
A \emph{linear Fourier integral operator} $T^{\phase}_{\sigma}$ is an operator which is defined to act on a Schwartz function $f$ by the formula
      \begin{equation} \label{eq:Linear_FIO}
      	T_\sigma^{\phase}f(x)= \int_{\R^n}  \sigma(x,\xi) \widehat{f}(\xi)e^{i\phase(x,\xi)} \, \ddd\xi.
      \end{equation}
\end{defn}
In what follows, we would need the boundedness of Fourier integral operators on the space $\bmo$. Although the boundedness of these operators on $h^p$ for $p\leq 1$, was established by M. Peloso and S. Secco \cite{PS}, the $\bmo$ boundedness of the operators would not follow directly from their $h^1$ boundedness result and duality. This is because the adjoint of an operator of the type \eqref{eq:Linear_FIO}, although being a general Fourier integral operator with an amplitude in the same class, is not of the same form as \eqref{eq:Linear_FIO}. In particular, the amplitude of this operator would not be compactly supported in both spatial variables, and therefore one can not represent the adjoint as a finite sum of operators of the form \eqref{eq:Linear_FIO} with amplitudes that are compactly supported in $x$.

\begin{thm}\label{eq:thm_bmo} For any linear Fourier integral operator  $T_\sigma^{\phase}$ of the type \eqref{defn RS FIO} with an amplitude $\sigma(x,\xi)\in S^{-(n-1)/2}_{1,0}(\R^n)$ with spatial compact support one  has
\[
	\norm{T_\sigma^{\phase} f}_{\bmo}\leq C\norm{f}_{\bmo}.
\]
\end{thm}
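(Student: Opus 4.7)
The plan is to use the characterisation
\[
\norm{u}_\bmo \approx \norm{(1-\widehat\Theta(D))u}_\BMO + \norm{\widehat\Theta(D)u}_{L^\infty}
\]
and establish the two estimates
\begin{equation*}
(\mathrm{a})\;\norm{T_\sigma^{\phase}f}_\BMO \lesssim \norm{f}_\bmo, \qquad (\mathrm{b})\;\norm{\widehat\Theta(D)(T_\sigma^{\phase}f)}_{L^\infty}\lesssim \norm{f}_\bmo.
\end{equation*}

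For \textbf{(a)}, I would execute a direct Carleson-measure/$T1$-type argument, since the shortcut via Peloso--Secco duality is blocked (see below). Decompose $f = f_0 + f_\infty$ with $f_0 = \widehat\Theta(D)f\in L^\infty$ and $f_\infty = (1-\widehat\Theta(D))f\in \BMO$, and estimate $T_\sigma^{\phase} f_0$ and $T_\sigma^{\phase} f_\infty$ separately. In both cases I would show that $|Q_t(T_\sigma^{\phase} f_j)(x)|^2\,\dd x\,\dd t/t$ is a Carleson measure with norm controlled by the square of the appropriate input norm: for the local piece (input restricted to $2B$ for each ball $B$) the $L^2$-boundedness of FIOs of order $-(n-1)/2$ suffices, while the far-field piece is controlled by Seeger--Sogge--Stein-type pointwise kernel asymptotics, which are available as soon as the $\xi$-support of the effective amplitude is bounded away from the singular locus $\{\xi=0\}$ of $\phase$.

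For \textbf{(b)}, the compactness of the spatial support $K$ of $\sigma$ is decisive. Then $T_\sigma^{\phase} f$ is supported in $K$, and since by (a) it lies in $\BMO$, the John--Nirenberg inequality forces $T_\sigma^{\phase} f\in L^p(K)$ for every $p<\infty$. In particular $\norm{T_\sigma^{\phase} f}_{L^1(K)}\lesssim \norm{f}_\bmo$. Since $\Theta\in \mathcal{S}$,
\[
\widehat\Theta(D)(T_\sigma^{\phase} f)(x) = (\Theta * T_\sigma^{\phase} f)(x),
\]
and Young's inequality gives $\norm{\widehat\Theta(D)(T_\sigma^{\phase} f)}_{L^\infty}\leq \norm{\Theta}_{L^\infty}\norm{T_\sigma^{\phase} f}_{L^1(K)}\lesssim \norm{f}_\bmo$.

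The \textbf{main obstacle} is exactly the one flagged in the paragraph preceding the theorem: the clean route of deducing $T_\sigma^{\phase}:\bmo\to\bmo$ by dualising the Peloso--Secco $h^1\to h^1$ bound is unavailable, because the adjoint of a spatially compactly-supported FIO of this form is \emph{not} itself of the same compactly-supported form. This forces the direct Carleson/$T1$ analysis in \textbf{(a)}, and within it one must handle carefully the singularity of $\phase$ at $\xi=0$ --- either via a Taylor expansion of $e^{i\phase(x,\xi)}$ near the origin, or via a preliminary smooth modification of $\phase$ there --- since on the low-frequency slice of the amplitude the usual stationary-phase manipulations do not apply directly.
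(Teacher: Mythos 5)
The paper does \emph{not} abandon duality; it rescues it. Your diagnosis that the Peloso--Secco $h^1$ result cannot be dualised directly (because the adjoint $S_\sigma^\phase$ is not of the form \eqref{eq:Linear_FIO} with spatially compactly supported amplitude) is correct, and matches the paper's own remark before the theorem. But the paper's workaround is to prove $S_\sigma^\phase\colon h^1\to h^1$ \emph{from scratch}, and then dualise. The key mechanism is the local-Riesz-transform characterisation \eqref{eq:norm_h1}: one shows $\norm{S_\sigma^\phase a}_{L^1}\lesssim 1$ uniformly over $h^1$-atoms $a$ (small atoms are $H^1$-atoms and one invokes Sogge's $H^1\to L^1$ result for FIOs of order $-(n-1)/2$; large atoms are handled by the non-stationary-phase decay $\abs{K(x,y)}\lesssim\abs{x}^{-N}$ for large $\abs{x}$ plus $L^2$-boundedness on the compact piece), and then observes that each $r_j S_\sigma^\phase$ has an amplitude $\frac{\xi_j}{\abs{\xi}}(1-\widehat\Theta(\xi))\sigma(y,\xi)$ in the \emph{same} symbol class, so the same atomic estimate applies to $r_j S_\sigma^\phase$. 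This closes the loop via \eqref{eq:norm_h1}, and the $\bmo$ estimate follows by duality. Your route --- direct Carleson-measure/$T1$ analysis of $Q_t T_\sigma^\phase$, split into low-frequency ($L^\infty$) and high-frequency ($\BMO$) inputs --- is a genuinely different program, and while it is not implausible in spirit, it is far from being a proof: verifying that $\abs{Q_t T_\sigma^\phase f_j}^2\,\dd x\,\dd t/t$ is Carleson would require controlling the kernel of $Q_t T_\sigma^\phase$ along the canonical relation, where the geometry of $y\mapsto\nabla_\xi\phase(x,\xi)$ decouples the localisation of input and output; this is essentially the content of a Seeger--Sogge--Stein analysis carried over to a Carleson setting, and you only gesture at it. By contrast, the paper's duality route reduces everything to a one-page atomic estimate.

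Two smaller points in your (b): deducing $T_\sigma^\phase f\in L^1$ from $T_\sigma^\phase f\in\BMO$ with compact support is not literally John--Nirenberg; it requires the additional (true, but unstated) observation that for a $\BMO$ function $u$ supported in a ball $K$, a telescoping comparison of $u_{2^k K}$ with $u_K$ shows $\abs{u_K}\lesssim\norm{u}_\BMO$, whence $\norm{u}_{L^1}\lesssim\abs{K}\norm{u}_\BMO$. Also, your invocation of ``$L^2$-boundedness of FIOs of order $-(n-1)/2$'' conflates two facts: $L^2$-boundedness requires only order $\le 0$; the order $-(n-1)/2$ is what drives the $H^1\to L^1$ / $L^\infty\to\BMO$ endpoints that the paper actually uses via Sogge's Proposition~6.2.6.
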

\begin{proof} First we claim that for any $\sigma$ and $\phase$ as above
\begin{equation*}\label{eq:claim}
	\norm{S_{\sigma}^\phase f}_{L^1(\R^n)}\leq \norm{f}_{h^1(\R^n)},
\end{equation*}
where
$
	S_{\sigma}^\phase f(x)=\int \brkt{\int \sigma(y,\xi) e^{-i\phase(y,\xi)+ix\cdot\xi} \ddd \xi}  f(y)\dd y=\int K(x,y) f(y) \dd y.
$

Assuming this claim, since
\[
	r_j (S_{\sigma}^\phase )f(x)=\int \brkt{\int \frac{\xi_j}{\abs{\xi}}(1- \widehat{\Theta }(\xi))\sigma(y,\xi) e^{-i\phase(y,\xi)+ix\cdot\xi} \ddd \xi}  f(y)\dd y,
\]
{with $r_j$ as in \eqref{eq:local_Riesz}}, and since $r_j (S_{\sigma}^\phase )$ is an operator of the same type as $S_{\sigma}^\phase$ with the amplitude $$\frac{\xi_j}{\abs{\xi}}(1- \widehat{\Theta }(\xi))\sigma(y,\xi) \in S^{-(n-1)/2}_{1,0}(\R^n),$$ it follows that for any $j=1,\ldots, n$
$
	\norm{r_j (S_{\sigma}^\phase f) }_{L^1(\R^n)}\leq C \norm{f}_{h^{1}(\R^n)}.
$
\\
Now \eqref{eq:norm_h1} yields $\norm{S_{\sigma}^\phase f }_{h^1(\R^n)}\leq C \norm{f}_{h^{1}(\R^n)}$ . Therefore using duality, since the adjoint of $T^\phase_\sigma$ is $S^\phase_\sigma$, the $\bmo$ boundedness follows.

Using the atomic characterisation of $h^1(\R^n)$ as done in \cite{Gol}*{Lemma 4}, it is enough to prove the result for an $h^1(\R)$ atom, which is a  function $a$ supported in a ball $B$, such that $\abs{a}\leq \abs{B}^{-1}$, and $\int a \dd x=0$ if the radius of $B$ is smaller than one, and with no  moment conditions on $a$ in case  the radius of $B$ is bigger than one. In particular, if the radius of $B$ is smaller than one, $a$ is also an $H^1(\R^n)$ atom. Therefore \cite{Sogge}*{Proposition 6.2.6}  implies 
\[
	\norm{S_{\sigma}^\phase a}_{L^1(\R^n)}\lesssim 1,
\]
whenever the radius of the support of $a$ is less than one. If the radius is larger than one we proceed as follows. As is observed in \cite{Sogge}*{p. 179}, integration by parts yields that the kernel of  $S_{\sigma}^\phase$, $K(x,y)$ satisfies the estimate $\abs{K(x,y)}\lesssim \abs{x}^{-N}$ for any $N$ when $\abs{x}$ is larger than some fixed constant $r$ depending on $\sigma$ and $\phase$. Therefore, 
\[
	\int \abs{S^\phase_\sigma a(x)}\dd x=\brkt{\int_{\abs{x}\leq r}+\int_{\abs{x}\geq r}} \abs{S^\phase_\sigma a(x)}\dd x=I+II.
\]
Now, $II\lesssim \int_{\abs{x}\geq r} \abs{x}^{-n-1}\dd x \int \abs{a(y)}\dd y\lesssim 1$. For $I$, we simply use the compact $x$-support of the domain of integration, the Cauchy-Schwarz inequality and the $L^2$ boundedness of Fourier integral operators of order zero to conclude that $I\lesssim 1$.
\end{proof}

\begin{defn}
A {bilinear oscillatory integral operator} $T^{\phase_1,\phase_2}_{\sigma}$ is an operator which is defined to act on Schwartz functions $f$ and $g$ by the formula
\begin{equation}\label{defn RS FIO}
T^{\phase_1,\phase_2}_{\sigma}(f,g)(x) = \iint \sigma(x,\xi,\eta)\widehat{f}(\xi)\widehat{g}(\eta)e^{i\phase_1(x,\xi)+i \phase_2(x,\eta)} \ddd\xi \ddd\eta.
\end{equation}
\end{defn}

  \begin{thm}\label{thm:1d_FIO} If $\sigma\in S^0_{1,0}(1,2)$ is compactly supported in the spatial variable and $\phase_1,\phase_2$ are non-degenerate phase functions. Then there exists a constant $C$, depending only on a finite number of derivatives of $\sigma$, such that
\begin{equation*} %
\|T_\sigma^{\phase_1,\phase_2}(f,g)\|_{\BMO_{\sigma_w}}\leq C \|f\|_{L^\infty}\|g\|_{L^\infty},
\end{equation*}
with $w(t)=1+\log_+ 1/t$.
 \end{thm}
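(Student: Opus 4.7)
The plan is to reduce the bilinear Fourier integral operator to pieces which can be handled either by the paraproduct bound in Corollary \ref{cor:bmo_bmo}, by the classical Grafakos--Torres theorem \cite{GT}, or as trivially smoothing. The three essential tools are: (i) Corollary \ref{cor:bmo_bmo}, which gives $\PP:\bmo\times\bmo\to\BMO_{\sigma_w}$ for the weight $w(t)=1+\log_+1/t$; (ii) Theorem \ref{eq:thm_bmo}, which in dimension $n=1$ ensures that order-zero linear FIOs (which is precisely the class $S^{-(n-1)/2}_{1,0}=S^0_{1,0}$ here) map $\bmo$ to itself; and (iii) the embedding $L^\infty\hookrightarrow\bmo$.

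First, I would introduce a Littlewood--Paley partition in the frequency variables $(\xi,\eta)\in\R^2$ and split the amplitude as $\sigma=\sigma_{LL}+\sigma_{\sim}+\sigma_{<}+\sigma_{>}$, where $\sigma_{LL}$ is supported in $\{|\xi|+|\eta|\lesssim 1\}$, $\sigma_\sim$ in the ``diagonal'' cone $\{|\xi|\sim|\eta|\gg 1\}$, and $\sigma_{<}$, $\sigma_>$ in the off-diagonal regions $\{|\xi|\ll|\eta|\}$, $\{|\eta|\ll|\xi|\}$ respectively. The piece associated to $\sigma_{LL}$ is a smoothing operator with compactly supported Fourier localisation in both variables, so it maps $L^\infty\times L^\infty\to L^\infty\subset \BMO\subset \BMO_{\sigma_w}$. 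For the diagonal piece $T^{\phi_1,\phi_2}_{\sigma_\sim}$, the combined phase $\phi_1(x,\xi)+\phi_2(x,\eta)$ is non-degenerate on the cone $|\xi|\sim|\eta|$, and a standard stationary phase / kernel analysis shows that the Schwartz kernel satisfies the hypotheses of a bilinear Calder\'on--Zygmund operator; by \cite{GT} this contributes boundedly into $\BMO$, and hence into $\BMO_{\sigma_w}$.

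The heart of the argument is the treatment of the off-diagonal pieces. Focusing on $\sigma_<$ (the case $\sigma_>$ is symmetric), the plan is to use a Calder\'on reproducing identity of the form $1=\int_0^1 \widehat\psi(t\eta)\,\widehat\phi(ct\xi)\,\tfrac{dt}{t}$, which holds on the support of $\sigma_<$ for $c$ small enough (residual terms being absorbed into $\sigma_{LL}$), to write
\[
T^{\phi_1,\phi_2}_{\sigma_<}(f,g)(x)=\int_0^1\!\!\iint \widehat\psi(t\eta)\,\widehat\phi(ct\xi)\,\tilde\sigma(x,\xi,\eta,t)\,\widehat f(\xi)\widehat g(\eta)\,e^{i\phi_1(x,\xi)+i\phi_2(x,\eta)}\,\dd\xi\,\dd\eta\,\frac{\dd t}{t}.
\]
After rescaling $(\xi,\eta)\mapsto (\xi/t,\eta/t)$, the amplitude $\tilde\sigma$ becomes a smooth function on a compact set in $(\xi,\eta)$ with uniform bounds on all derivatives and on its $x$-derivatives. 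A tensor-product Fourier-series expansion then decouples the $\xi$ and $\eta$ dependence and produces a representation
\[
T^{\phi_1,\phi_2}_{\sigma_<}(f,g)(x)=\sum_{k} \int_0^1 Q_t[T^{\phi_2}_{a_k}g](x)\,P_t[T^{\phi_1}_{b_k}f](x)\,m_k(t,x)\,\frac{\dd t}{t},
\]
with summable amplitudes $m_k(t,x)$ satisfying the hypotheses of Theorem \ref{xdependent version}. Applying Corollary \ref{cor:bmo_bmo} to each summand and then Theorem \ref{eq:thm_bmo} to the linear FIOs, followed by $\|h\|_\bmo\lesssim\|h\|_{L^\infty}$, completes the estimate.

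The principal obstacle will be the last reduction in the preceding paragraph: realising $T^{\phi_1,\phi_2}_{\sigma_<}$ as a convergent sum of variable-coefficient paraproducts of linear FIOs of $f$ and $g$, and checking that the amplitudes $m_k(t,x)$ satisfy $\sup_t(\|m_k(t,\cdot)\|_{L^\infty}+\|\nabla_x m_k(t,\cdot)\|_{L^\infty})$ with summable $k$-bounds. This requires tuning the reproducing formula so that no residue spoils the paraproduct structure, carrying out the separation-of-variables expansion with uniform-in-$t$ control of the tail, and ensuring that the compact spatial support of $\sigma$ together with the non-degeneracy of $\phi_1,\phi_2$ survive the differentiation in $x$. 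Modulo these technicalities, the $L^\infty\times L^\infty\to\BMO_{\sigma_w}$ bound follows by chaining the three ingredients (i)--(iii) above.
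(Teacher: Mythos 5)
Your toolkit -- Corollary \ref{cor:bmo_bmo}, Theorem \ref{eq:thm_bmo}, and the embedding $L^\infty\hookrightarrow\bmo$ -- is exactly right, and the overall strategy of reducing $T^{\phase_1,\phase_2}_\sigma$ to paraproducts of linear FIOs of $f$ and $g$ is the correct one. However, the paper obtains this reduction by citing \cite{RRS2}*{Section 4}, which produces directly the representation $T^{\phase_1,\phase_2}_\sigma(f,g)=\PP(T^{\phase_1}_\nu f,T^{\phase_2}_\nu g)+E(f,g)$ with $E:L^\infty\times L^\infty\to L^\infty$, so that \emph{no} diagonal piece needs to be treated as a bilinear Calder\'on--Zygmund operator and the entire operator is a single variable-coefficient paraproduct. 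Your attempt to reconstruct this reduction from scratch contains two genuine gaps.

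First, your claim that the diagonal piece $T^{\phase_1,\phase_2}_{\sigma_\sim}$ is a bilinear Calder\'on--Zygmund operator is not justified and is false in general. By homogeneity, in one dimension $\phase_j(x,\xi)=\phase_j^\pm(x)|\xi|$, so the kernel takes the form $\tilde K\bigl(x,\phase_1^\pm(x)-y,\phase_2^\pm(x)-z\bigr)$; its singular set $\{y=\phase_1^\pm(x),\,z=\phase_2^\pm(x)\}$ does not coincide with the diagonal $\{x=y=z\}$ unless $\phase_j^\pm(x)=x$. The bilinear CZ size estimate $|K(x,y,z)|\lesssim(|x-y|+|x-z|)^{-2}$ therefore fails (for instance, with $\phase_1^+(x)=2x$ the kernel is large near $y\approx 2x$ while $|x-y|$ stays of order one on the compact support), so \cite{GT} does not apply to this piece. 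Second, in the off-diagonal reduction the rescaling $(\xi,\eta)\mapsto(\xi/t,\eta/t)$ sends the phase to $e^{i\phase_1(x,\xi)/t+i\phase_2(x,\eta)/t}$ by degree-one homogeneity; the $x$-derivatives of the ``rescaled amplitude'' thus blow up like $1/t$, so the tensor-product Fourier expansion does not yield coefficients $m_k(t,x)$ with the uniform $\sup_t(\|m_k(t,\cdot)\|_{L^\infty}+\|\nabla_x m_k(t,\cdot)\|_{L^\infty})$ bound required by Theorem \ref{xdependent version}. Any correct version of your argument must first separate the two phases onto the inner slots (producing $T^{\phase_1}$ acting on $f$ and $T^{\phase_2}$ acting on $g$) and only afterwards expand the remaining $x$-dependent amplitude -- which is precisely the content of the nontrivial lemma in \cite{RRS2}*{Section 4} that the paper invokes.
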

 \begin{proof} It is shown in \cite{RRS2}*{Section 4} that the study of the operator  $T_\sigma^{\phase_1,\phase_2}$ reduces to study an operator of the form
 \[
 	\PP(T^{\phase_1}_{\nu} f,T^{\phase_2}_{\nu} g)+E(f,g),
 \]
 where $\nu(x,\xi)=\chi(x)\mu(\xi)$, $\chi$ is a compactly supported smooth function and $\mu$ is a smooth function such that $1-\mu$ is supported in a neighbourhood of the origin and $E:L^\infty \times L^\infty \to L^\infty$.

Observe that in particular, $\nu \in S^0_{1,0}(1,1)$ with compact support in $x$. Then, Proposition \ref{eq:thm_bmo} implies in particular that $T^{\phase_1}_{\nu}: L^\infty\to \bmo$. So, taking into account Corollary \ref{cor:bmo_bmo}, the result follows by composition of bounded operators and triangle inequality.
 \end{proof}
 \section{Application to bilinear Coifman-Meyer multipliers and Kato-Ponce estimates}\label{kato-ponce}

 In this section, following the method of Coifman-Meyer \cite{CM4} we can also produce a boundedness result for bilinear Coifman-Meyer multipliers. Furthermore, we can apply this to obtain certain Kato-Ponce type estimates, which originated in the seminal paper by T. Kato and G. Ponce \cite{KPo}.

 \begin{thm}\label{thm:H-M} For any $\sigma(\xi,\eta)\in \mathcal{C}^\infty(\R^n\times \R^n\setminus\{0,0\})$ such that
 \begin{equation}\label{eq:H_M}
 	 \abs{\d^\alpha_\xi \d^\beta_\eta \sigma(\xi,\eta)}\lesssim \brkt{\abs{\xi}+\abs{\eta}}^{-\abs{\alpha}-\abs{\beta}}, \quad {\rm for}\, (\xi,\eta)\neq (0,0)
 \end{equation}
 for any $\alpha$ and $\beta$, then the bilinear operator
 \[
 	\sigma(D)(f,g)(x)=\iint \sigma(\xi,\eta) \widehat{f}(\xi)\widehat{g}(\eta) e^{i x(\xi+\eta)}\ddd \xi \ddd \eta
 \]
 satisfies, for any admissible weight $w$
 \[
 	\norm{\sigma(D)(f,g)}_{\BMO_{\sigma_w}}\lesssim \norm{f}_{X_w}\norm{g}_{X_w}.
 \]
 \end{thm}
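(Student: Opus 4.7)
\emph{Proof proposal.} The plan is to follow the classical Coifman--Meyer reduction of a H\"ormander--Mikhlin type bilinear multiplier to a superposition of paraproducts, so that the bulk of the work is delegated to Theorem \ref{thm:main}. First, I would choose a smooth homogeneous of degree zero partition of unity $1=\chi_I(\xi,\eta)+\chi_{II}(\xi,\eta)+\chi_{III}(\xi,\eta)$ off the origin, with $\chi_I$ supported in $\{|\xi|<\tfrac{3}{4}|\eta|\}$, $\chi_{II}$ in $\{|\eta|<\tfrac{3}{4}|\xi|\}$, and $\chi_{III}$ in $\{\tfrac{1}{2}|\xi|\le|\eta|\le 2|\xi|\}$, and split accordingly $\sigma=\sigma_I+\sigma_{II}+\sigma_{III}$. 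Each $\sigma_j$ still obeys the estimates \eqref{eq:H_M} on its support.

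For the off-diagonal pieces $\sigma_I$ and $\sigma_{II}$, a continuous Littlewood--Paley reproducing identity in the ``high'' variable combined with a Fourier series expansion on the corresponding dyadic annulus (taking advantage of the fact that the ``low'' variable is trapped in a controlled ball) produces an expansion of the type
\begin{equation*}
\sigma_I(D)(f,g)(x)=\sum_{\alpha,\beta\in\Z^n}c_{\alpha,\beta}\int_0^\infty \bigl(P_t^{\alpha} f\bigr)(x)\bigl(Q_t^{\beta} g\bigr)(x)\,m_{\alpha,\beta}(t)\,\frac{\dd t}{t},
\end{equation*}
where $P_t^{\alpha}$, $Q_t^{\beta}$ are standard frequency localisations built from translated bumps $\widehat{\phi}(\cdot-\alpha)$ and $\widehat{\psi}(\cdot-\beta)$, the symbols satisfy $\|m_{\alpha,\beta}\|_{L^\infty}\lesssim 1$ uniformly, and the coefficients decay as $|c_{\alpha,\beta}|\lesssim_N (1+|\alpha|+|\beta|)^{-N}$ for every $N$ (the rapid decay coming from the smoothness of $\sigma$). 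An analogous expansion, with the roles of $f$ and $g$ swapped, represents $\sigma_{II}(D)(f,g)$. Each individual term is a paraproduct of the form \eqref{intro:const coeff para}, so Theorem \ref{thm:main}, followed by summation against $c_{\alpha,\beta}$ and use of the embedding $X_w\subset\BMO$, delivers
\begin{equation*}
\norm{\sigma_I(D)(f,g)}_{\BMO_{\sigma_w}}+\norm{\sigma_{II}(D)(f,g)}_{\BMO_{\sigma_w}}\lesssim \|f\|_{X_w}\|g\|_{X_w}.
\end{equation*}

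The main obstacle is the diagonal piece $\sigma_{III}$, whose analogous expansion yields sums of terms of the form $\int_0^\infty (Q_t^{\alpha} f)(Q_t^{\beta} g)\,m_{\alpha,\beta}(t)\frac{\dd t}{t}$, which is not directly a paraproduct of the form \eqref{intro:const coeff para}. To handle these terms, I would exploit that the spectrum of the product $(Q_t^{\alpha} f)(Q_t^{\beta} g)$ lies in a ball of radius $\lesssim 1/t$: inserting the decomposition $1=\widehat{\phi_1}(t\,\cdot)+\widehat{\psi_1}(t\,\cdot)$ on the outside splits each diagonal paraproduct into two pieces of precisely the form of $\Pi_2$ and $\Pi_1$ appearing in the proof of Theorem \ref{thm:main}, the only difference being that the factor $P_t^{(1)}g$ there is now $Q_t^{\beta}g$. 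The argument given for $\Pi_2$ goes through unchanged, while for $\Pi_1$ the key bound on the auxiliary function $v(t,x)$ is even easier in this setting, since \eqref{eq:Q1} gives $\|Q_t^{\beta} g\|_{L^\infty}\lesssim \|g\|_\BMO\le \|g\|_{X_w}$ (so no logarithmic factor is needed, and the $1/w(t)$ is absorbed via $w\ge 1$), while the Carleson measure argument involving $F$ is identical. Summing the resulting estimates against the rapidly decaying coefficients $c_{\alpha,\beta}$ completes the proof.
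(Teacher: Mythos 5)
Your overall plan is the classical Coifman--Meyer reduction, which is also what the paper does, but you organise the initial cutoff differently and this changes how much work can be delegated to Theorem \ref{thm:main}. The paper uses a \emph{two}-piece homogeneous decomposition $\sigma=\sigma_1+\sigma_2$, with $\sigma_1$ supported in $\{|\xi|\geq|\eta|/20\}$ and $\sigma_2$ in $\{|\eta|\geq 10|\xi|\}$; in particular $\sigma_1$ already contains the whole diagonal, and because the low-frequency bump $\widehat{\phi}$ is chosen to equal $1$ on a ball large enough to cover that range, both pieces are literally superpositions of paraproducts of the form \eqref{intro:const coeff para}, so Theorem \ref{thm:main_thm} applies directly and the diagonal is absorbed for free (it reappears inside the proof of Theorem \ref{thm:main} as the $\Pi_2$ term). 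Your three-piece cutoff instead isolates an explicit diagonal piece $\sigma_{III}$, and you correctly see that the resulting high-high paraproducts are not of the form \eqref{intro:const coeff para}, forcing you to re-enter the proof of Theorem \ref{thm:main} and re-run the $\Pi_1/\Pi_2$ argument with $P_t^{(1)}g$ replaced by $Q_t^{\beta}g$. Your reasoning there is sound --- the Carleson measure input for $F$ is unchanged, and $\|Q_t^{\beta}G\|_{L^\infty}\lesssim\|G\|_{\BMO}$ together with $w\geq 1$ makes the $v$-bound even easier --- so the proposal does prove the theorem, just less economically than the paper, which never has to reopen Theorem \ref{thm:main}. (The paper also uses a continuous representation $\iint\Pi_{u,v}\,\dd u\,\dd v/(1+|u|^2+|v|^2)^N$ obtained by Fourier inversion and integration by parts rather than a discrete Fourier series, but these are interchangeable here.)

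One technical slip you should fix: your ``translated bumps'' $\widehat{\phi}(\cdot-\alpha)$ and $\widehat{\psi}(\cdot-\beta)$ are translations \emph{in frequency}, which shift the spectra of $P_t^{\alpha}f$ and $Q_t^{\beta}g$ to regions centred at $\alpha/t$ and $\beta/t$ and thereby destroy the Littlewood--Paley structure that the entire paraproduct machinery requires. The Fourier series expansion on the annulus produces modulations $e^{i\alpha\cdot\xi}\widehat{\phi}(\xi)=\widehat{\phi(\cdot+\alpha)}(\xi)$, i.e.\ \emph{spatial} translates of the kernel, exactly as the paper writes $\psi^{u}(x)=\psi(x+u)$. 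With that correction (and the attendant polynomial growth in $|\alpha|,|\beta|$ of the Schwartz seminorms of the spatially translated kernels, which your rapid decay of the coefficients $c_{\alpha,\beta}$ safely absorbs), the argument goes through.
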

 \begin{proof} Proceeding as in the proof of Proposition 2 in \cite{CM4}*{p.154}, one decomposes $\sigma=\sigma_1+\sigma_2$ with $\sigma_1(\xi,\eta)$ supported in $\abs{\xi}\geq \abs{\eta}/20$, $\sigma_2(\xi,\eta)$ supported in $10\abs{\xi}\leq \abs{\eta}$ and $\sigma_1,\sigma_2$ satisfying \eqref{eq:H_M}.

 Let $\widehat{\psi}$ be a smooth Schwartz function supported in $4\leq 5 \abs{\xi}\leq 6$ and such that
 $$\int_0^\infty \abs{\widehat{\psi}(\xi)}^2\frac{\dd t}{t}=1, \qquad \mathrm{for}\,\,\, \xi\neq 0.$$
 Let $\widehat{\phi}$  be a smooth compactly supported Schwartz function such that is equal to $1$ if $\abs{\xi}\leq 30$. Then an argument based on integration by parts shows that for any positive real number $N$  
one can write
 \[
 	\sigma_1(D)(f,g)(x)=\iint  \Pi_{u,v}(f,g)(x)\frac{\dd u \dd v}{(1+\abs{u}^2+\abs{v}^2)^N},
 \]
 where 
 \[
 	\Pi_{u,v}(f,g)(x)=\int_0^\infty Q_t^u(f) P_t^v(g) m(t,u,v) \frac{\dd t}{t},
 \]
$\sup_{t,u,v} \abs{m(t,u,v)}<+\infty$ and $$Q_t^u(f)=\widehat{\psi^u}(tD) f,\quad P_t^u(f)=\widehat{\phi^v}(tD) g$$ with $\psi^u(x)=\psi(x+u)$ and $\phi^v(x)=\phi(x+v)$. Another integration by parts argument revelas that, for any $\delta>0$, 
\[
	\abs{\psi^u(x)}\lesssim \frac{(1+\abs{u})^{n+\delta}}{(1+\abs{x})^{n+\delta}}\quad {\rm and} \quad \abs{\phi^v(x)}\lesssim \frac{(1+\abs{v})^{n+\delta}}{(1+\abs{x})^{n+\delta}}.
\]
So one can apply Theorem \ref{thm:main_thm} to deduce
\[
	\norm{\Pi_{u,v}(f,g)}_{\BMO_{\sigma_w}}\lesssim P(u,v) \norm{f}_{X_w}\norm{g}_{X_w},
\]
with $P(u,v)$ a polynomial expression in $\abs{u}$ and $\abs{v}$, independent on $f,g$. Then, taking $N$ large enough yields
\[
	\norm{\sigma_1(D)(f,g)}_{\BMO_{\sigma_w}}\lesssim \norm{f}_{X_w}\norm{g}_{X_w}.
\]
For $\sigma_2(D)$ the roles of $\xi$ and $\eta$ are reversed, but by the symmetry of the estimates, a similar argument to the previous one yields
\[
	\norm{\sigma_2(D)(f,g)}_{\BMO_{\sigma_w}}\lesssim \norm{f}_{X_w}\norm{g}_{X_w}.
\]
Then, the result follows by putting these last two estimates together.
    \end{proof}

\begin{rem}\label{eq:final_remark}  One can weaken the assumption \eqref{eq:H_M} on the symbol by requiring only finite number of derivatives of the symbol. Tracing the proofs of theorems \ref{thm:main}  and \ref{thm:H-M}, it turns out that we would need at least $4n+1$ derivatives. In light of the results of \cite{GT} and \cite{MT,Tomita}, there seems to be some room for improvement of the number of derivatives.
\end{rem}

\begin{cor} If $\sigma(\xi,\eta)\in \mathcal{C}^\infty(\R^n\times \R^n\setminus\{0,0\})$ satisfying \eqref{eq:H_M}, then
\[
	\norm{\sigma(D)(f,g)}_{\BMO_{\sigma_w}}\lesssim \norm{f}_{\bmo}\norm{g}_{\bmo}.
\]
with $w=1+\log_+ 1/t$ and
\[
		\norm{\sigma(D)(f,g)}_{\BMO}\lesssim \norm{f}_{L^\infty}\norm{g}_{L^\infty}.
\]
\end{cor}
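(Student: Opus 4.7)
The plan is to obtain both inequalities as direct specialisations of Theorem \ref{thm:H-M} to two particular admissible weights, after identifying the abstract spaces $X_w$ and $\BMO_{\sigma_w}$ with classical function spaces in each case.

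First, I will establish the $\bmo\times\bmo$ bound by choosing $w(t)=1+\log_+ 1/t$, which was verified to be admissible in the examples following the definition of admissibility. Theorem \ref{cor:bmo} gives $X_w=\bmo$ with equivalent norms, so that Theorem \ref{thm:H-M} applied with this weight yields $\norm{\sigma(D)(f,g)}_{\BMO_{\sigma_w}}\lesssim \norm{f}_{X_w}\norm{g}_{X_w}\thickapprox \norm{f}_{\bmo}\norm{g}_{\bmo}$.

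For the $L^\infty\times L^\infty\to \BMO$ estimate, I will take $w\equiv 1$, which is trivially admissible. From the definition \eqref{eq:sigma} together with the normalisation \eqref{eq:normalisation}, the corresponding symbol $\sigma_w$ is bounded and bounded away from zero, so $I_{1/\sigma_w}$ and its inverse are bounded on $\BMO$, giving $\BMO_{\sigma_w}=\BMO$ with equivalent norms. As already noted in the proof of Theorem \ref{thm:main_thm}, the definition \eqref{eq:X_w} with $w\equiv 1$ also produces $X_w=L^\infty$ with equivalent norms. Applying Theorem \ref{thm:H-M} to this weight then yields the second inequality directly.

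Since the analytical work is entirely contained in Theorem \ref{thm:H-M}, the corollary reduces to a bookkeeping step. There is no real obstacle beyond verifying the admissibility of the two chosen weights and invoking the identifications $X_w=\bmo$ for $w=1+\log_+ 1/t$, and $X_w=L^\infty$ together with $\BMO_{\sigma_w}=\BMO$ for $w\equiv 1$, each of which has been established earlier in the paper.
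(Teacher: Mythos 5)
Your proposal is correct and follows exactly the intended line of reasoning: the paper gives no explicit proof for this corollary precisely because it is a direct specialisation of Theorem \ref{thm:H-M} to the two admissible weights $w=1+\log_+1/t$ and $w\equiv 1$, with the identifications $X_w=\bmo$ (Theorem \ref{cor:bmo}) and $X_w=L^\infty$, $\BMO_{\sigma_w}=\BMO$ (as noted in the proof of Theorem \ref{thm:main_thm}). Your invocations of admissibility, the $X_w$ identifications, and the Hörmander--Mikhlin boundedness of $I_{1/\sigma_w}$ and its inverse on $\BMO$ when $w\equiv 1$ are all the same ingredients the paper relies on.
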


\begin{cor} For any $f,g\in \bmo$
\[
	\norm{f g}_{\BMO_{\sigma_w}}\lesssim \norm{f}_{\bmo}\norm{g}_{\bmo},
\]
with $w=1+\log_+ 1/t$.
\end{cor}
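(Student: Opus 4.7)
The plan is to observe that pointwise multiplication $(f,g)\mapsto fg$ is itself a bilinear Fourier multiplier of Coifman--Meyer type, and then invoke the preceding corollary directly. More precisely, I would take the constant symbol $\sigma(\xi,\eta)\equiv 1$, which belongs to $\mathcal{C}^\infty(\R^n\times\R^n\setminus\{(0,0)\})$ and trivially satisfies the Coifman--Meyer estimate \eqref{eq:H_M}, since $\partial^\alpha_\xi\partial^\beta_\eta \sigma$ vanishes for $|\alpha|+|\beta|>0$ and is bounded for $\alpha=\beta=0$.

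Next, I would verify the identity $\sigma(D)(f,g)=fg$ by inserting $\sigma\equiv 1$ into the defining formula
\[
\sigma(D)(f,g)(x)=\iint \sigma(\xi,\eta)\widehat{f}(\xi)\widehat{g}(\eta) e^{ix(\xi+\eta)}\ddd\xi \ddd\eta,
\]
which Fourier inversion immediately reduces to $f(x)g(x)$. Strictly speaking this computation is only formal for $f,g\in\bmo$, but one may for instance regularise $f$ and $g$ (e.g.\ by $P_\varepsilon$), apply the identity at the Schwartz-class level, and take $\varepsilon\to 0$; alternatively, one simply reinterprets both sides as elements of $Z'(\R^n)$ in the sense used in Section \ref{functionspaces}. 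Either way no new difficulty arises beyond standard regularisation.

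Once these two observations are in place, the desired bound
\[
\|fg\|_{\BMO_{\sigma_w}}\lesssim \|f\|_{\bmo}\|g\|_{\bmo}
\]
with $w(t)=1+\log_+1/t$ is a direct application of the previous corollary, which provides exactly this estimate for $\sigma(D)(f,g)$ under the Coifman--Meyer hypothesis on $\sigma$. There is in effect no genuine obstacle in this step: the only thing to be mildly careful about is the justification of the symbolic identity $\sigma(D)(f,g)=fg$ on the class $\bmo$, for which the regularisation argument above suffices.
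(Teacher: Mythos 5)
Your proposal is correct and follows the paper's own proof exactly: take $\sigma\equiv 1$, note that it satisfies \eqref{eq:H_M}, identify $\sigma(D)(f,g)=fg$, and apply the preceding corollary. The extra remarks about regularisation are a reasonable (if slightly redundant) elaboration of a step the paper leaves implicit.
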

\begin{proof} This follows from the previous corollary by taking $\sigma(\xi,\eta)=1$.
\end{proof}
As a consequence of Theorem \ref{thm:H-M} and Remark \ref{eq:final_remark}  we obtain the following endpoint Kato-Ponce type inequality, which is a variant of \cite{GMN}*{Theorem 3}.

\begin{thm}\label{thm:kato_ponce} If $s >4n+1$ and $w$ is an admissible weight, then for every $f, g \in \mathcal{S}$
\begin{equation*}
\norm{D^s(fg)}_{\BMO_{\sigma_w}}  \lesssim \norm{D^{s}f}_{X_w} \norm{g}_{X_w} + \norm{f}_{X_w} \norm{D^{s}g}_{X_w},
\end{equation*}
where $D^s$ denotes the operator defined for $h \in \mathcal{S}$ as
$
\widehat{D^sh}(\xi):= |\xi|^s \hat{h}(\xi)$ for all $\xi \in \R^n$.
In particular, for $w=1+\log_+ 1/t$
\begin{equation*}
\norm{D^s(fg)}_{\BMO_{\sigma_w}}  \lesssim \norm{D^{s}f}_{\bmo} \norm{g}_{\bmo} + \norm{f}_{\bmo} \norm{D^{s}g}_{\bmo}.
\end{equation*}
\end{thm}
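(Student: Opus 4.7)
The plan is to reduce this to Theorem \ref{thm:H-M}, invoked in the finite-smoothness form of Remark \ref{eq:final_remark}. Viewing
\[
	D^{s}(fg)(x) = \iint |\xi+\eta|^{s}\, \widehat{f}(\xi)\widehat{g}(\eta)\, e^{ix\cdot(\xi+\eta)}\, \ddd\xi\ddd\eta,
\]
I would look for a Coifman--Meyer type decomposition $|\xi+\eta|^{s} = m_{1}(\xi,\eta)\,|\xi|^{s} + m_{2}(\xi,\eta)\,|\eta|^{s}$, so that
\[
	D^{s}(fg) = m_{1}(D)(D^{s}f, g) + m_{2}(D)(f, D^{s}g),
\]
after which one simply applies Theorem \ref{thm:H-M} termwise and uses the triangle inequality. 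The second assertion would follow from the first by specialising to $w=1+\log_+ 1/t$ and invoking Corollary \ref{cor:bmo}, which identifies $X_{w}$ with $\bmo$.

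To build the decomposition, fix $\chi\in \mathcal{C}^\infty(\R^{2n}\setminus\{0\})$ homogeneous of degree zero, equal to $1$ on $\{|\eta|\leq |\xi|/3\}$ and vanishing on $\{|\eta|\geq |\xi|/2\}$. On the support of $\chi$ one has $|\xi+\eta|\geq |\xi|/2>0$, so that
\[
	m_{1}(\xi,\eta):=\chi(\xi,\eta)\,\frac{|\xi+\eta|^{s}}{|\xi|^{s}}
\]
is $\mathcal{C}^\infty$ and homogeneous of degree zero on $\R^{2n}\setminus\{0\}$. Setting
\[
	m_{2}(\xi,\eta):=\big(1-\chi(\xi,\eta)\big)\,\frac{|\xi+\eta|^{s}}{|\eta|^{s}},
\]
which makes sense on $\{|\eta|\geq |\xi|/3\}$ and vanishes elsewhere, one checks by direct inspection that $m_{1}(\xi,\eta)|\xi|^{s}+m_{2}(\xi,\eta)|\eta|^{s}=|\xi+\eta|^{s}$ on $\R^{2n}\setminus\{0\}$.

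The crux of the argument is the verification that $m_{1}$ and $m_{2}$ satisfy \eqref{eq:H_M} up to order $4n+1$. For $m_{1}$ this is immediate from its $\mathcal{C}^\infty$ smoothness and degree-zero homogeneity. For $m_{2}$ the only loss of regularity occurs along the set $\{\xi=-\eta\}\subset \supp(1-\chi)$, where the factor $|\xi+\eta|^{s}$ ceases to be $\mathcal{C}^\infty$; however, the function $v\mapsto|v|^{s}$ on $\R^{n}$ is of class $\mathcal{C}^{\lfloor s\rfloor}$, so the hypothesis $s>4n+1$ delivers exactly the $\mathcal{C}^{4n+1}$ regularity needed. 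Combining this with the degree-zero homogeneity of $m_{2}$ yields
\[
	|\partial_{\xi}^{\alpha}\partial_{\eta}^{\beta} m_{i}(\xi,\eta)| \lesssim (|\xi|+|\eta|)^{-|\alpha|-|\beta|},\qquad |\alpha|+|\beta|\leq 4n+1,\quad i=1,2,
\]
which by Remark \ref{eq:final_remark} is enough to apply Theorem \ref{thm:H-M} to $m_{1}(D)$ and $m_{2}(D)$, concluding the proof. The principal obstacle is precisely this regularity count for $m_{2}$ near $\xi=-\eta$; it is what forces the restriction $s>4n+1$ and, by the remark, any improvement in the differentiation count inside Theorem \ref{thm:H-M} would mechanically relax the threshold on $s$ here.
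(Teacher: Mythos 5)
Your argument is correct and reaches the same final step as the paper (applying Theorem \ref{thm:H-M} via the finite-smoothness version from Remark \ref{eq:final_remark}, then specializing $w=1+\log_+ 1/t$ and invoking Corollary \ref{cor:bmo}), but the decomposition is not the one the paper uses. The paper simply quotes the Grafakos--Maldonado--Naibo result (following Grafakos--Oh) that $D^s(fg)$ splits into \emph{three} bilinear multipliers $\sigma_{1,s}(D)(D^sf,g)+\sigma_{2,s}(D)(f,D^sg)+\sigma_{3,s}(D)(f,D^sg)$ with Coifman--Meyer symbols of the required finite smoothness; in that reference the third term isolates the diagonal region $|\xi|\approx|\eta|$, which is where $|\xi+\eta|^s$ loses infinite smoothness. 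You instead build a self-contained \emph{two}-term decomposition by a homogeneous-degree-zero cutoff $\chi$, folding the diagonal region into $m_2$, and then carry out the regularity count yourself: $m_1$ is $C^\infty$ and degree-zero homogeneous, hence satisfies the Mihlin bounds to all orders, while $m_2$ has the singularity of $|\xi+\eta|^s$ at $\xi=-\eta$, which for $s>4n+1$ still leaves $m_2\in C^{4n+1}$ with degree-zero homogeneity, giving exactly the $|\alpha|+|\beta|\le 4n+1$ Mihlin bounds needed by Remark \ref{eq:final_remark}. This is a perfectly sound alternative: it is more explicit and avoids an external citation, at the cost of the small regularity bookkeeping you do around $\xi+\eta=0$. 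One minor imprecision: $v\mapsto|v|^s$ is $C^\infty$ for even integer $s$ and only $C^{s-1}$ for odd integer $s$, so the blanket claim ``$C^{\lfloor s\rfloor}$'' is not quite right; but since $s>4n+1$ in all cases one gets at least $C^{4n+1}$, so the conclusion stands.
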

\begin{proof} Following the approach in \cite{GOh}, it is shown in \cite{GMN} that the bilinear mapping $(f,g) \mapsto D^s(fg)$ can be decomposed into the sum of three bilinear multipliers
\begin{equation*}
D^s(fg) = \sigma_{1,s}(D)(D^s f,g) + \sigma_{2,s}(D)(f,D^s g) + \sigma_{3,s}(D)(f,D^sg),
\end{equation*}
where, for $s > 4n+1$, these have symbols satisfying \eqref{eq:H_M} {for $\abs{\alpha}+\abs{\beta}\leq 4n+1$}. Then the result follows from Theorem \ref{thm:H-M} and Remark \ref{eq:final_remark}.

The last assertion follows by taking the particular weight $w=1+\log_+ 1/t$ and applying Corollary \ref{cor:bmo}.
\end{proof}

\begin{rem} Observe that for $w=1$, the previous theorem recovers \cite{GMN}*{Theorem 3} albeit with a larger amount of derivatives required in \eqref{eq:H_M}.
 \end{rem}

 \begin{bibdiv}
\begin{biblist}
\bib{Carleson}{article}{
   author={Carleson, Lennart},
   title={An interpolation problem for bounded analytic functions},
   journal={Amer. J. Math.},
   volume={80},
   date={1958},
   pages={921--930},
   issn={0002-9327},
}
\bib{CM4}{book}{
   author={Coifman, Ronald R.},
   author={Meyer, Yves},
   title={Au del\`a des op\'erateurs pseudo-diff\'erentiels},
   series={Ast\'erisque},
   volume={57},
   publisher={Soci\'et\'e Math\'ematique de France},
   place={Paris},
   date={1978},

}
\bib{FS}{article}{
   author={Fefferman, C.},
   author={Stein, E. M.},
   title={$H^{p}$ spaces of several variables},
   journal={Acta Math.},
   volume={129},
   date={1972},
   number={3-4},
   pages={137--193},

}
\bib{Gol}{article}{
   author={Goldberg, David},
   title={A local version of real Hardy spaces},
   journal={Duke Math. J.},
   volume={46},
   date={1979},
   number={1},
   pages={27--42},

}

\bib{MR807149}{book}{
   author={Garc{\'{\i}}a-Cuerva, Jos{\'e}},
   author={Rubio de Francia, Jos{\'e} L.},
   title={Weighted norm inequalities and related topics},
   series={North-Holland Mathematics Studies},
   volume={116},
   note={Notas de Matem\'atica [Mathematical Notes], 104},
   publisher={North-Holland Publishing Co.},
   place={Amsterdam},
   date={1985},
   pages={x+604},
   isbn={0-444-87804-1},
}

\bib{G}{book}{
   author={Grafakos, Loukas},
   title={Modern Fourier analysis},
   series={Graduate Texts in Mathematics},
   volume={250},
   edition={2},
   publisher={Springer},
   place={New York},
   date={2009},
   pages={xvi+504},
   isbn={978-0-387-09433-5},
}
\bib{GMN}{article}{
	author = {Grafakos, Loukas},
	author={Maldonado, Diego},
	author={Naibo, Virginia},
	title = {A remark on an endpoint Kato-Ponce inequality},
	journal={To appear in Differential and Integral Equations}
url = {http://arxiv.org/abs/1311.4560},
date = {2013}
}
\bib{GOh}{article}{
author = {Grafakos, Loukas}
author={Oh, Seungly},
title = {The Kato-Ponce Inequality},
journal = {Communications in Partial Differential Equations},
doi = {10.1080/03605302.2013.822885},
}
\bib{GP}{article}{
   author={Grafakos, Loukas},
   author={Peloso, Marco M.},
   title={Bilinear Fourier integral operators},
   journal={J. Pseudo-Differ. Oper. Appl.},
   volume={1},
   date={2010},
   number={2},
   pages={161--182},
   issn={1662-9981},
}
\bib{GT}{article}{
   author={Grafakos, Loukas},
   author={Torres, Rodolfo H.},
   title={Multilinear Calder\'on-Zygmund theory},
   journal={Adv. Math.},
   volume={165},
   date={2002},
   number={1},
   pages={124--164},
   issn={0001-8708},
}
\bib{KPo}{article}{
   author={Kato, Tosio},
   author={Ponce, Gustavo},
   title={Commutator estimates and the Euler and Navier-Stokes equations},
   journal={Comm. Pure Appl. Math.},
   volume={41},
   date={1988},
   number={7},
   pages={891--907},
   issn={0010-3640}
}
\bib{MT}{article}{
   author={Miyachi, Akihiko},
   author={Tomita, Naohito},
   title={Minimal smoothness conditions for bilinear Fourier multipliers},
   journal={Rev. Mat. Iberoam.},
   volume={29},
   date={2013},
   number={2},
   pages={495--530},
   issn={0213-2230}
}
\bib{PS}{article}{
   author={Peloso, Marco M.},
   author={Secco, Silvia},
   title={Boundedness of Fourier integral operators on Hardy spaces},
   journal={Proc. Edinb. Math. Soc. (2)},
   volume={51},
   date={2008},
   number={2},
   pages={443--463},
}

\bib{RS}{article}{
   author={Rodr\'iguez-L\'opez, Salvador},
   author={Staubach, Wolfgang},
   title={Estimates for rough Fourier integral and pseudodifferential operators and applications to the boundedness of multilinear operators},
   journal={J. Funct. Anal.},
   volume={10},
   number={264},
   pages={2356--2385},
   year={2013}
}
\bib{RRS2}{article}{
  author={Rodr\'iguez-L\'opez, Salvador},
  author={Rule, David},
  author={Staubach, Wolfgang},
  title={A Seeger-Sogge-Stein theorem for bilinear Fourier integral operators},
  journal={To appear in Adv. Math.}
}

\bib{RRS}{article}{
  author={Rodr\'iguez-L\'opez, Salvador},
  author={Rule, David},
  author={Staubach, Wolfgang},
  title={On the boundedness of certain bilinear oscillatory integral operators},
  journal={To appear in Trans. Amer. Math. Soc.}
}
\bib{MR2833565}{article}{
   author={Sawano, Yoshihiro},
   author={Sugano, Satoko},
   author={Tanaka, Hitoshi},
   title={Generalized fractional integral operators and fractional maximal
   operators in the framework of Morrey spaces},
   journal={Trans. Amer. Math. Soc.},
   volume={363},
   date={2011},
   number={12},
   pages={6481--6503},
   issn={0002-9947},
}
\bib{Sogge}{book}{
   author={Sogge, Christopher D.},
   title={Fourier integrals in classical analysis},
   series={Cambridge Tracts in Mathematics},
   volume={105},
   publisher={Cambridge University Press},
   place={Cambridge},
   date={1993},
   pages={x+237},
   isbn={0-521-43464-5},
}
\bib{S}{book}{
   author={Stein, Elias M.},
   title={Harmonic analysis: real-variable methods, orthogonality, and
   oscillatory integrals},
   series={Princeton Mathematical Series},
   volume={43},
   publisher={Princeton University Press},
   place={Princeton, NJ},
   date={1993},

}

\bib{Tomita}{article}{
   author={Tomita, Naohito},
   title={A H\"ormander type multiplier theorem for multilinear operators},
   journal={J. Funct. Anal.},
   volume={259},
   date={2010},
   number={8},
   pages={2028--2044},
   issn={0022-1236},
}

\end{biblist}
\end{bibdiv}
\end{document}